\documentclass[11pt,a4paper]{article}
\usepackage[T1]{fontenc}
\usepackage{lmodern}
\usepackage{amsmath,amssymb,amsthm,mathtools}
\usepackage{array,longtable,booktabs,tabularx}
\usepackage[margin=25mm,headheight=15pt]{geometry}
\usepackage{mathrsfs,graphicx,xcolor,enumitem}
\usepackage[expansion=false]{microtype}
\usepackage{fancyhdr}
\usepackage[unicode,hypertexnames=false,colorlinks=true,linkcolor=blue!40!black,
  citecolor=blue!40!black,urlcolor=blue!40!black]{hyperref}
\hypersetup{pdftitle={Local rigidity of minimal hypersurfaces},
  pdfsubject={Local rigidity in the four-sphere by compactness of finite jets}}
\allowdisplaybreaks[2]
\setlist{topsep=4pt,itemsep=2pt}
\numberwithin{equation}{section}
\newtheorem{theorem}{Theorem}[section]
\newtheorem{lemma}[theorem]{Lemma}
\newtheorem{proposition}[theorem]{Proposition}
\newtheorem{corollary}[theorem]{Corollary}
\theoremstyle{definition}

\theoremstyle{remark}
\newtheorem{remark}[theorem]{Remark}
\newcommand{\Scal}{\operatorname{Scal}}
\DeclareMathOperator{\tr}{tr}
\DeclareMathOperator{\Sym}{Sym}
\DeclareMathOperator{\Id}{Id}

\title{\textbf{On the Bryant conjecture for minimal hypersurfaces in $S^4$}\\[3pt]}

\date{}
\author{%
	Chao Qian\textsuperscript{1}
	\and
	Ya Tao\textsuperscript{2,*}%
}
\date{}
\begin{document}
\maketitle

\begingroup
\makeatletter

\renewcommand{\@makefntext}[1]{\noindent#1}
\makeatother
\renewcommand{\thefootnote}{}
\footnotetext{%
  \begin{minipage}[t]{\textwidth}
  \footnotesize\raggedright
  \setlength{\parindent}{0pt}%
  \setlength{\parskip}{0pt}%
  Chao Qian is supported by NSFC
  (No.~12371048) and Ya Tao is supported by Nankai Zhide Foundation.\par
\par
  \smallskip
  \noindent\rule{0.3\textwidth}{0.4pt}\par
  \smallskip
  \textsuperscript{*}\,(Corresponding author)\par
  \textit{E-mail:} \texttt{tao-ya@mail.nankai.edu.cn}\par
  \smallskip

  \textit{E-mail:} \texttt{6120150035@bit.edu.cn}\par
  
  \medskip
  \textsuperscript{1}\,School of Mathematics and Statistics, Beijing Institute of Technology,Beijing, 100081, P.R. China.\par
  
  \smallskip
  \textsuperscript{2}\,Chern Institute of Mathematics and LPMC, Nankai University, Tianjin 300071, P. R. China.
  \end{minipage}%
}
\endgroup

\begin{abstract}
We prove that a local minimal hypersurface $M^3$ in the unit sphere $S^4$
with constant scalar curvature and three distinct principal curvatures at every point
must be locally congruent to the minimal Cartan isoparametric hypersurface in $S^4$, which provides a positive answer to the Bryant conjecture  \cite{ChangLocal}. The key ingredient is to prove that, for each fixed \(S_0>3\), the set $\mathscr{G}_{S_0}$ of fourth-order jets of normalized local graph solutions satisfying \(|A|^2\equiv S_0\) is compact, where \(A\) denotes the shape operator. 
\end{abstract}
\medskip

\noindent\textbf{Keywords:}
 Bryant conjecture; Minimal hypersurface; Constant scalar curvature; Local rigidity; Isoparametric hypersurface.

\medskip
\noindent\textbf{Mathematics Subject Classification (2020):}
 {53C42; 53B25; 53C24.}
\section{Introduction}\label{sec:strategy}

For a minimal hypersurface $M^3$ in the unit sphere $S^4$, let $A$
denote the shape operator with respect to a local unit normal.
The Gauss equation gives
$\operatorname{Scal}=6-|A|^2$, and hence constant scalar curvature is
equivalent to constancy of $|A|^2$. The study of such hypersurfaces is closely
related to the classical rigidity results of Chern, do Carmo and
Kobayashi~\cite{CDK} concerning the length of the second fundamental
form. For a minimal hypersurface $M^3$ in $S^4$ with constant scalar curvature, Chang~\cite{ChangLocal} proved that if $M^3$ has multiple
principal curvatures at some point, then $|A|^2\in\{0,3\}$, without any
compactness assumption. Combined with the results of Peng and
Terng~\cite{PengTerng,PengTer} for closed minimal hypersurfaces with
three distinct principal curvatures everywhere, this yields the
classification of closed minimal hypersurfaces in $S^4$ with constant
scalar curvature~\cite{ChangLocal}. In the constant mean curvature
setting, de Almeida and Brito~\cite{AB} proved that a closed hypersurface
of $S^4$ with constant mean curvature and constant nonnegative scalar
curvature is isoparametric. Chang~\cite{ChangCMC} subsequently
established the same conclusion without the nonnegativity assumption.

Regarding the classification of local minimal hypersurface with constant scalar curvature in $S^4$, Chang \cite{ChangLocal} stated a conjecture of Robert Bryant as follows:

\textbf{Bryant Conjecture.} A piece of minimally immersed hypersurfaces of constant scalar curvature in $S^4$
is isoparametric.

In general, there has been no local version
of the Peng-Terng theorem \cite{PengTerng}.
The remaining local problem with three distinct principal
curvatures requires an additional argument, since the function $f=\tr(A^3)$
on a local hypersurface needs not to attain an extremum. In dimension $n>3$, de Almeida, Brito,
Scherfner and Weiss~\cite[Theorem~3.1]{ABSW} proved that a local hypersurface
in $S^{n+1}$ with constant mean and scalar curvatures and exactly three
distinct principal curvatures everywhere is isoparametric. 


In this paper, we study the local rigidity of minimal hypersurfaces in $S^4$ with
constant scalar curvature and three distinct principal curvatures.
\begin{theorem}\label{thm:main}
Let $M^3$ be a local minimal hypersurface of the unit sphere $S^4$ with three distinct principal curvatures at every point and constant scalar curvature. Then $M^3$ must
have constant principal curvatures. Moreover, $M^3$ is locally congruent to the minimal Cartan isoparametric hypersurface in $S^4$.
\end{theorem}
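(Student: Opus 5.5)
We work in a local orthonormal frame $\{e_1,e_2,e_3\}$ of principal directions, which is smooth because the three principal curvatures $\lambda_1<\lambda_2<\lambda_3$ are distinct. Minimality and constant scalar curvature give
\[
\textstyle\sum_i\lambda_i=0,\qquad \sum_i\lambda_i^2=S_0 .
\]
So the only free pointwise invariant is $f=\tr(A^3)=3\lambda_1\lambda_2\lambda_3$. It suffices to show $f$ is locally constant. Then the $\lambda_i$ are constant, $M^3$ is isoparametric with three distinct principal curvatures, and Cartan's classification forces the minimal Cartan hypersurface with $S_0=3$.

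\emph{Step 1: derive the structure equations.} Set $h_{ijk}$ to be the components of $\nabla A$. Since $h_{ijk}$ is fully symmetric and $\sum_i h_{iik}=0$, and differentiating $|A|^2=S_0$ gives $\sum_i\lambda_i h_{iik}=0$, we get $h_{iik}=0$ for all $i,k$. This holds because the $2\times 3$ system $\{\sum_i h_{iik}=0,\ \sum_i\lambda_i h_{iik}=0\}$, together with $h_{iik}$ being determined by $e_k(\lambda_i)$, reduces everything to the single unknown $h_{123}$ and the derivatives $e_k(\lambda_i)$.

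More precisely, $e_k(\lambda_i)=h_{iik}$, and the two constraints give $e_k(\lambda)\in\ker\begin{pmatrix}1&1&1\\ \lambda_1&\lambda_2&\lambda_3\end{pmatrix}$. The connection forms are $\omega_{ij}(e_k)=h_{ijk}/(\lambda_j-\lambda_i)$. The data therefore reduce to the functions $\lambda_i$, $h_{123}$, and three scalar functions $a_k=e_k(f)$ up to normalization.

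Next we apply the Gauss equation, the Codazzi equations, and the Ricci identity for $h_{ijkl}$, and successively differentiate. Each order of differentiation yields new algebraic relations among the jets. The Simons-type identity
\[
\textstyle\frac12\Delta|A|^2=|\nabla A|^2+|A|^2(3-|A|^2)
\]
gives $|\nabla A|^2=S_0(S_0-3)$, which is a constant. So for $S_0>3$ the jet $(\lambda,\nabla A)$ lies on a compact set, and for $S_0<3$ there are no solutions. We also have $|\nabla A|^2\ge 6h_{123}^2$.

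\emph{Step 2: compactness of fourth-order jets.} We consider normalized local graph solutions at a base point and their jets up to order four. We invoke the compactness of $\mathscr{G}_{S_0}$ announced in the abstract. The plan is to show that the algebraic system obtained from Step 1, up to fourth derivatives of the graph, cuts out a real algebraic variety that is bounded. Boundedness comes from the constancy of $|\nabla A|^2$ and from an analogous identity at the next order for $|\nabla^2A|^2$, which follows from differentiating Simons' formula once more. The distinctness of the principal curvatures, which could degenerate to $\lambda_i=\lambda_j$, must be handled by a separate argument. That argument shows the limit still has distinct principal curvatures, using Chang's theorem that multiple principal curvatures force $S_0\in\{0,3\}$.

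\emph{Step 3: use compactness as a substitute for a maximum principle.} The obstacle in the local setting is that $f$ need not attain an extremum. We replace this with compactness. The continuous function $f$, or the first derivatives $|\nabla f|^2$, on the compact jet space $\mathscr{G}_{S_0}$ attains its maximum at some jet. By a Cauchy–Kovalevskaya/analyticity argument, since minimal hypersurfaces are real analytic, every jet in $\mathscr{G}_{S_0}$ is realized by an actual local solution. At an extremal jet we then reproduce the Peng–Terng computation: $\nabla f=0$ and the Hessian sign condition, combined with the fourth-order identities, yield $S_0=3$ or a contradiction. Hence $\nabla f\equiv0$ for $S_0\neq 3$ is impossible, so no solution with $S_0>3$ exists. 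For $S_0=3$ we get $|\nabla A|=0$, so the principal curvatures are constant. Finally, a local isoparametric minimal hypersurface of $S^4$ with three distinct principal curvatures is locally congruent to Cartan's.

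The main obstacle is Step 2 together with the realizability in Step 3. One must prove that the fourth-order jet variety is genuinely compact, with no escape to infinity and no collision of principal curvatures. One must also prove that the extremal jet comes from a solution, so that second-order maximum conditions apply. To handle this, I would first try to write the compatibility conditions as explicit polynomial identities in $(\lambda,h_{123},h_{ijkl})$ and obtain an a priori bound via a sum-of-squares argument.
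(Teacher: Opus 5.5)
Your architecture matches the paper's: compactness of normalized fourth-order jets as a substitute for the maximum principle, realization of an extremal jet by an actual solution, and a pointwise analysis at a critical point of $f$. As written, though, the proposal has genuine errors.

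\textbf{The endgame aims at the wrong value.} The minimal Cartan hypersurface has principal curvatures $\sqrt3,0,-\sqrt3$, so $S_0=6$. Conversely, $S_0=3$ forces $\nabla A=0$, hence $(\lambda_i-\lambda_j)(1+\lambda_i\lambda_j)=0$ for $i\neq j$, which is impossible with three distinct principal curvatures. Your Step 3 concludes that no solution with $S_0>3$ exists, and Cartan's example refutes this. So the critical-point computation cannot output what you claim.

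What is actually needed is Lemma~\ref{lem:critical}: if $df(p)=0$ at a single point, then $f(p)=0$ and $S_0=6$. Since $\mathscr G_{S_0}$ is compact, $F=\tr(A^3)$ attains its maximum and minimum at jets of actual solutions. The jets at nearby points of those solutions also lie in $\mathscr G_{S_0}$, so $f$ has a local extremum there and $df=0$. The lemma then gives $S_0=6$ and $\max F=\min F=0$, so $f\equiv0$ and the principal curvatures are $\sqrt3,0,-\sqrt3$ everywhere. That lemma is the computational core of the proof and must be carried out, not cited. The paper proves it purely pointwise, using only $df(p)=0$: it combines the two Bochner-type identities, the Laplacian of the ordinary one, and an explicit evaluation of $|\nabla^3A|^2$ at $p$, ending in a sum of squares.

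\textbf{Step 1 contains a false claim.} The assertion $h_{iik}=0$ for all $i,k$ does not hold. The relations $\sum_ih_{iik}=\sum_i\lambda_ih_{iik}=0$ only give $h_{iir}=(\lambda_j-\lambda_k)u_r$, with $(i,j,k)$ cyclic and $u_r$ a free scalar. Vanishing needs the third Vandermonde equation $3\sum_i\lambda_i^2h_{iir}=e_r(f)=0$, i.e.\ a critical point of $f$. If it held everywhere, then $e_r(\lambda_i)=h_{iir}=0$ and there would be nothing to prove.

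\textbf{Realization of limit jets is missing its key ingredient.} Your boundedness argument in Step 2 is fine, but ``Cauchy--Kovalevskaya/analyticity'' does not give closedness. The $(M_n,p_n)$ are arbitrary local pieces, so $p_n$ may approach $\partial M_n$. The graphs $u_n$ then have no common domain, and there is no uniform elliptic or analyticity radius. Moreover, $\{\tr A=0,\ |A|^2=S_0\}$ is overdetermined, so it cannot be solved for arbitrary data.

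The missing ingredient is Lemma~\ref{lem:no-complex-characteristic}. The principal symbols $|\xi|^2$, $A(\xi,\xi)$, $T(\xi,\xi,\xi)$ of $\tr A$, $|A|^2-S_0$, $|\nabla A|^2-S_0(S_0-3)$ have no common nonzero complex zero. This also holds for limiting $(A_\infty,T_\infty)$, proved using $U_\infty,V_\infty$ and fifth-order identities rather than by passing the pointwise statement to the limit. Via Proposition~\ref{spacedec}, all fifth derivatives then become analytic functions of the $4$-jet near $a_\infty$. The resulting complete system $\partial_iY=\mathcal G_i(x,Y)$ is integrated radially on a ball of uniform radius. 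Its solutions with data $a_n$ coincide with $u_n$, satisfy the equations on the whole ball by analytic continuation, and the equations pass to the limit.

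Likewise, distinctness of the limiting principal curvatures (Lemma~\ref{lem:repeated}) must be proved on the finite jet, by running Chang's Bochner-identity argument on $(A_\infty,T_\infty,U_\infty)$. You cannot invoke Chang's theorem on the limit, because it is not yet known to be realized. Without these two ingredients, Step 2 yields only boundedness of $\mathscr G_{S_0}$, not closedness, and the extremal jet need not come from any hypersurface.
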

\begin{remark}
Throughout the paper, a local hypersurface means a connected open piece of a smoothly immersed hypersurface. No completeness or compactness assumption is imposed. Shape operators and principal curvatures are understood with respect to a locally chosen unit normal.
\end{remark}
Together with Chang's result \cite{ChangLocal}, Theorem \ref{thm:main} completely solves the Bryant conjecture.

\begin{corollary}\label{cor:full-local}
Let $M^3$ be a local minimal hypersurface of $S^4$
with constant scalar curvature. Then $S=|A|^2\in \{0,3,6\}$ and the
principal curvatures are constant. Moreover, $M^3$ is locally congruent to a minimal isoparametric hypersurface in $S^4$.
\end{corollary}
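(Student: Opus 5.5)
The corollary follows by splitting according to the multiplicity of the principal curvatures, combining Theorem~\ref{thm:main} with Chang's local result~\cite{ChangLocal}. Let $M^3\subset S^4$ be a connected local minimal hypersurface with $S=|A|^2$ constant. Consider the open set $U\subset M$ of points where the three principal curvatures are distinct, and its complement $M\setminus U$, which is closed.

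\textbf{Case 1: $U\neq\emptyset$.} On each connected component $U_0$ of $U$, Theorem~\ref{thm:main} gives that the principal curvatures are constant and that $U_0$ is locally congruent to the minimal Cartan hypersurface. For the Cartan hypersurface the principal curvatures are $\sqrt3,0,-\sqrt3$, so $S=6$ on $U_0$. Since $S$ is constant on all of $M$, we get $S\equiv 6$. Now suppose $M\setminus U\neq\emptyset$. Chang's theorem says that a point with a multiple principal curvature forces $S\in\{0,3\}$, which contradicts $S=6$. Hence $U=M$, and the whole of $M$ is locally congruent to the Cartan hypersurface.

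\textbf{Case 2: $U=\emptyset$.} Every point then has a repeated principal curvature, and Chang's theorem gives $S\in\{0,3\}$.
\begin{itemize}
\item If $S=0$, then $A\equiv0$ and $M$ is totally geodesic, so it is an open piece of an equatorial $S^3$.
\item If $S=3$, minimality together with a repeated root gives principal curvatures $(\lambda,\lambda,-2\lambda)$ with $6\lambda^2=3$. Thus $\lambda=\pm1/\sqrt2$ is constant, and the principal curvatures are constant. For the chosen normal they are $1/\sqrt2$ (multiplicity $2$) and $-\sqrt2$, possibly with the sign of $\lambda$ flipped by the normal choice, which does not affect congruence. An isoparametric hypersurface with two distinct constant principal curvatures is locally a product of spheres by Cartan's classical classification: a hypersurface in $S^4$ with constant principal curvatures is isoparametric, and Cartan classified the local ones with $g\le 3$. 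This gives the minimal Clifford product $S^1(\sqrt{1/3})\times S^2(\sqrt{2/3})$. One can check directly that this product has $S=3$.
\end{itemize}

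\textbf{Potential subtleties.} Two points need care. First, the continuity and connectedness argument in Case 1, which is handled by the constancy of $S$ as above. Second, the local version of Cartan's classification for $g\le2$, which is classical: the distributions of the constant principal curvatures are integrable, their leaves are umbilic spheres, and the local structure theorem identifies the hypersurface. This is the only place beyond the cited results, and it is routine.

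Combining the two cases gives $S\in\{0,3,6\}$ with constant principal curvatures, and $M$ is locally congruent to a minimal isoparametric hypersurface in $S^4$.
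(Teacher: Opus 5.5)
Your proof is correct. It uses the same three ingredients as the paper (Theorem~\ref{thm:main}, Chang's multiplicity result, and Cartan's classification), but you split the cases differently.

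The paper splits by the value of $S$. Simons' identity $|\nabla A|^2=S(S-3)$ gives $S=0$ or $S\ge 3$. For $S=3$ it yields $\nabla A=0$, so the principal curvatures are constant at once. For $S>3$, Lemma~\ref{lem:repeated} (the paper's own local version of Chang's argument) gives three distinct principal curvatures everywhere, so Theorem~\ref{thm:main} applies to all of $M$. No open-set or component bookkeeping is needed.

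You instead split by the multiplicity locus. You apply Theorem~\ref{thm:main} on components of the open set $U$, and you use Chang's theorem as a black box twice: to exclude multiple points once $S=6$, and to force $S\in\{0,3\}$ when $U=\emptyset$. This is valid, just slightly more roundabout.

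Your handling of $S=3$ through the form $(\lambda,\lambda,-2\lambda)$ silently uses connectedness. For instance, $\tr A^3=-6\lambda^3$ is continuous and takes values in a two-point set, so it is constant. You assumed connectedness but did not spell this out; the Simons identity route makes it automatic. In return, you identify the $S=3$ case explicitly as the minimal product $S^1(\sqrt{1/3})\times S^2(\sqrt{2/3})$, which the paper leaves implicit in ``constant principal curvatures, hence isoparametric.''
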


The paper is organized as follows. In Section \ref{sec2}, we introduce the basic notations and geometric identities, and establish three key lemmas needed for the proof of Theorem \ref{thm:main}. In Section \ref{sec3}, we formulate the geometric conditions as an analytic system for normalized graph jets and prove compactness of the space of fourth-order jets of normalized local graph solutions with fixed $S_0>3$. Finally, Section \ref{sec:finish} combines this compactness result with the rigidity lemma at critical points of the cubic invariant $\tr (A^3)$ to prove Theorem \ref{thm:main}.

\section{Basic notations and three crucial lemmas}\label{sec2}
\subsection{Tensor convention}
Let $M^3$ be a hypersurface in $S^4$. Choose a local unit normal vector field $\nu$, and let $A$ denote the shape operator with respect to $\nu$. Let $g, \nabla$ and $h$ denote the induced metric, the Levi-Civita connection  and the second fundamental form of $M^3$ respectively. 
We use the induced metric $g$ to identify $\nabla^mA$
with the corresponding fully covariant tensors $\nabla^m h$.
More precisely, the $(1,1)$-tensor $A$ is identified with the
$(0,2)$-tensor $h$ by
\begin{equation*}\label{con1}
	h(X,Y)=g(AX,Y).
\end{equation*}
Since $\nabla g=0$, lowering the contravariant index commutes with
covariant differentiation. In general, after lowering the contravariant index of $\nabla^m A$, one can obtain
$\nabla^m h$. Accordingly, we can regard $\nabla^m A$ as a
covariant $(m+2)$-tensor $\nabla^m h$.
Choose a local orthonormal frame $\{e_1,e_2,e_3\}$ of $TM$.
For instance, we use the following tensor components convention:
\[(\nabla A)_{ijk}=(\nabla A)^i_{jk}=(\nabla h)_{ijk}= h_{ij;k}\quad \text{and}\quad (\nabla ^2A)_{ijkl}=(\nabla ^2A)^i_{jkl}=(\nabla ^2h)_{ijkl}=h_{ij;kl}.\]

\subsection{Geometric preliminaries and reduction}
Let $M^3$ be a minimal hypersurface with constant scalar curvature in the unit sphere $S^4$. Choose a unit normal vector field and let $A$ be the shape
operator of $M^3\subset S^4$. Moreover, we assume $M^3$ has three distinct principal curvatures.  Set
\begin{equation}\label{human3}
 S:=|A|^2=\tr(A^2)\quad \text{and} \quad f:=\tr(A^3).
\end{equation}
Minimality and the Gauss equation imply
\begin{equation}\label{humantrA}
	 \tr A=0\quad \text{and} \quad  \Scal=6-S,
\end{equation}
 where Scal is the scalar curvature of $M^3$. Thus $S$ is constant. Let  $\lambda_1,\lambda_2,\lambda_3$ be the principal
curvature functions with principal orthonormal frame $e_1,e_2,e_3$. Then
\begin{equation}\label{eq:main-cubic}
	f=\sum_i\lambda_i^3=3\lambda_1\lambda_2\lambda_3\quad \text{and} \quad
 \det(t\Id-A)=t^3-\frac S2t-\frac{f}{3},
\end{equation}
for $t\in \mathbb{R}$. Let $$T=\nabla A,\quad
U=\nabla ^2A \quad \text{and} \quad V=\nabla^3 A.$$
 Differentiating \eqref{human3}, we get
\begin{equation}\label{human4}
	\sum_{i=1}^{3}\lambda_iT_{iik}=0\quad \text{and} \quad\sum_{i=1}^{3}3\lambda_i^2T_{iik}=e_k(f), \quad\forall k=1,2,3.
\end{equation}
Twice differentiating \eqref{humantrA} gives
\begin{equation}\label{humantrA2}
\sum_{i=1}^{3}T_{iik}=0, \quad
\sum_{i=1}^{3}U_{iikl}=0\quad\text{for}\quad k,l=1,2,3.
\end{equation}

Simons' identity \cite{Simons} gives
\begin{equation}\label{found:Simons}
 \Delta A=(3-S)A,\quad C:=|\nabla A|^2=S(S-3),
\end{equation}
where $\Delta$ denotes the rought laplacian (See Section 1.2 of \cite{Simons}).
From (\ref{found:Simons}) we know that $S=0$ or $S\geq3$. The first case is incompatible with distinct
principal curvatures. If $S=3$, then $\nabla A=0$. Consequently, Ricci identity (See \eqref{human1}) implies
\[
 (\lambda_i-\lambda_j)(1+\lambda_i\lambda_j)=0
 \quad(i\ne j).
\]
Distinct eigenvalues would force every pairwise product to be $-1$,
which is impossible for three distinct principal curvatures. Hence $S>3$.

\subsection{Two Bochner-type identities}\label{app:identities}
In this subsection, let $M^3$ be a minimal hypersurface with constant scalar curvature in $S^4$. However, we do not assume $M^3$ has three distinct principal curvatures. Choose a local orthonormal frame $\{e_1,e_2,e_3\},$ and write
\[
h_{ij}=\langle Ae_i,e_j\rangle,\quad
T_{ijk}=h_{ij;k},\quad U_{ijkl}=h_{ij;kl}.
\]
A semicolon lists covariant derivatives in the order in which they are taken: $h_{ij;kl}=(\nabla_{e_l}\nabla h)_{ijk}$, where $h$ is the second fundamental form.  Moreover, it follows from the Codazzi equation that $T$ is completely symmetric.
 The Gauss equations are
\begin{align}\label{found:Gauss}
	R_{klib}
	=\delta_{li}\delta_{kb}-\delta_{ki}\delta_{lb}
	+h_{li}h_{kb}-h_{ki}h_{lb}.
\end{align}

For any covariant $r$-tensor $K$ with slots $I=(i_1,\ldots,i_r)$, our Ricci indentity's convention is
\begin{equation}
	K_{I;kl}-K_{I;lk}
	=\sum_{s=1}^r\sum_{b=1}^3 R_{kl i_sb}K_{I(s\mapsto b)}.
	\label{found:comm}
\end{equation}
Here $I(s\mapsto b)$ means replacement of the $s$th slot by $b$. 	At a point $p$ where $A$ is diagonal and $A(e_i) =\lambda_ie_i$, Codazzi and \eqref{found:comm} give
\begin{equation}\label{human1}
	U_{ijkk}-U_{kkij}
	=\delta_{ij}(\lambda_i-\lambda_k)(1+\lambda_i\lambda_k).
\end{equation}

Differentiating \eqref{found:comm} and taking appropriate contractions give the following formula
	\begin{align}
	\bigl(\Delta\nabla K-\nabla\Delta K\bigr)_{Ic}
	={}&\sum_{a,b,s}
	\left[(\nabla_aR_{ca i_sb})K_{I(s\mapsto b)}
	+2R_{ca i_sb}K_{I(s\mapsto b);a}\right]\notag\\
	&+\sum_{a,b}R_{ca ab}K_{I;b},
	\label{found:lapcomm}
\end{align}
where $1\leq a,b,c\leq 3, 1\leq s\leq r$.

Since the scalar curvature of $M^3$ is constant, it follows that $0=\frac12\Delta S=|T|^2+\langle h,\Delta h\rangle$.
Applying \eqref{found:lapcomm} to $K=h$, differentiating the first equation of \eqref{found:Simons}, and using
\begin{equation}\label{humanxie}
\nabla_{m}R_{kl ib}
=T_{lim}h_{kb}+h_{li}T_{kbm}
-T_{kim}h_{lb}-h_{ki}T_{lbm},
\end{equation}
we obtain, at a point where $A$ is diagonal,
\begin{equation}
	(\Delta T)_{ijk}
	=\left[9-S-\lambda_i^2-\lambda_j^2-\lambda_k^2
	+2(\lambda_i\lambda_j+\lambda_i\lambda_k+\lambda_j\lambda_k)\right]T_{ijk}.
	\label{found:lapT}
\end{equation}

Define the scalar contractions
\[
\mathcal A=\sum_{ijk}\lambda_i^2T_{ijk}^2,\quad
\mathcal B=\sum_{i,j,k}\lambda_i\lambda_jT_{ijk}^2,\quad
E=\sum_{i,j,k}\lambda_iT_{ijk}^2,\quad
\mathcal D=\sum_{i,j,k}\lambda_i^2\lambda_jT_{ijk}^2.
\]
By \eqref{found:lapT},  considering $\langle T, \Delta T\rangle$, we get the ordinary Bochner-type identity
\begin{equation}
	0=\tfrac12\Delta C
	=|U|^2+(9-S)C-3\mathcal A+6\mathcal B.
	\label{found:ordinary}
\end{equation}

For a trace-free three-by-three matrix, by Cayley--Hamilton theorem, we obtain
\begin{equation}\label{eq:CH}
	A^3=\frac S2A+\frac f3I,\quad
	\tr A^4=\frac12(\tr A^2)^2,\quad
	\tr A^5=\frac56\tr A^2\tr A^3.
\end{equation}
Let $B$ be symmetric with $\tr B=\tr(AB)=0$. Comparing the quadratic
terms after replacing $A$ by $A+sB$ in the last two identities gives
	\begin{align}
		2\tr(A^2B^2)+\tr(ABAB)&=\frac S2\tr(B^2),\label{eq:1}\\
		\tr(A^3B^2)+\tr(A^2BAB)
		&=\frac f6\tr(B^2)+\frac S2\tr(AB^2).\label{eq:2}
	\end{align}
By taking
$B=\nabla_kA$ in \eqref{eq:1} and summing over $k$ we get \begin{equation}
	2\mathcal A+\mathcal B=\tfrac12SC.
	\label{found:contractions1}
\end{equation} Multiplying $B^2$ to \eqref{eq:CH} and taking trace, one can obtain
\begin{equation}\label{eq:4}
	\tr (A^3B^2)=\frac S2\tr( AB^2)+\frac f3\tr( B^2).
\end{equation}
It follows from \eqref{eq:2} and \eqref{eq:4} that
$\tr(A^2BAB)=-f\tr(B^2)/6$, and then 
\begin{equation}
	\mathcal D=-\tfrac16 fC.
	\label{found:contractions}
\end{equation}
By using \eqref{eq:4} again, 
\[
G:=\sum_{i,j,k}\lambda_i^3T_{ijk}^2=\frac S2E+\frac f3C.
\]
In particular, by \eqref{found:contractions1}, \eqref{found:ordinary} becomes
\begin{equation}
	|U|^2=15\mathcal A-(2S+9)C.
	\label{found:simplified}
\end{equation}

For any covariant $r$-tensor $K$, define 
\[
L_AK:=\sum_{a,b}h_{ab}K_{;ab},\quad
Q_{ij}:=\sum_{a,b}T_{abi}T_{abj}.
\]
By taking the second derivative of $S$, it implies
\begin{equation}\label{human2}
	\sum_k\lambda_kU_{kkij}=-Q_{ij}.
	\end{equation} In light of \eqref{human2}, multiplying \eqref{human1} by $\lambda_k$ and summing over $k$ give
\begin{equation}
	(L_Ah)_{ij}=-Q_{ij}+\delta_{ij}(S\lambda_i^2-f\lambda_i-S),
\notag
\end{equation}
which is equivaluent to the tensor identity
\begin{equation}\label{found:weightedtensor}
	(L_Ah)_{ij}
	=
	-Q_{ij}
	+S\sum_a h_{ia}h_{aj}
	-fh_{ij}
	-S\delta_{ij}.
\end{equation}
Set
\[
\mathcal P:=\sum_{i,j,k,a,b}T_{ijk}T_{abk}U_{ijab},\quad
\mathcal W:=\sum_{i,j,k,l}\lambda_lU_{ijkl}^2.
\]
Differentiating \eqref{found:weightedtensor} and contracting with $T$ provide
\begin{equation}\label{eq:diff-LAh}
	\sum_{i,j,k}T_{ijk}\nabla_k(L_Ah)_{ij}
	=-2\mathcal P+2SE-fC,
\end{equation}
where we use the fact that the covariant $3$-tensor $df\otimes h$ is orthogonal to $T$, 
and $T$ is fully symmetric tensor.
The product rule for the coefficient $h_{ab}$ and \eqref{found:lapcomm} give
\begin{equation}\label{eq:LA-commutator}
(L_A\nabla h)_{ijk}-\nabla_k(L_Ah)_{ij}
=-\sum_{a,b}T_{abk}U_{ijab}+\mathcal R_{ijk},
\end{equation}
where 
\begin{align*}
	\mathcal R_{ijk}=\sum_{a,b}\lambda_a\bigl[
	&(\nabla_aR_{ka ib})h_{bj}
	+(\nabla_aR_{ka jb})h_{ib}
	+2R_{ka ib}T_{bja}+2R_{ka jb}T_{iba}
	+R_{ka ab}T_{ijb}\bigr]
\end{align*}
at a point where $A$ is diagonal.
Substitution \eqref{found:Gauss} and its covariant derivatives into $\mathcal R_{ijk}$ implies
\begin{equation}\label{eq:curvature-contraction}
	\sum T_{ijk}\mathcal R_{ijk}
	=(S+3)E+6\mathcal D-3G
	=\left(3-\frac S2\right)E+6\mathcal D-fC.
\end{equation}
By \eqref{eq:diff-LAh}, \eqref{eq:LA-commutator}, and \eqref{eq:curvature-contraction},
it follows from $\frac12L_AC=0$ that
\[
0=\mathcal W-3\mathcal P+\left(3+\frac{3S}{2}\right)E+6\mathcal D-2fC.
\]
Moreover, using \eqref{found:contractions}, we obtain the weighted Bochner-type identity
\begin{equation}
	\mathcal W-3\mathcal P+\frac32(S+2)E-3fC=0.
	\label{found:weighted}
\end{equation}

\subsection{An identity at a critical point of $f$}
Let $M^3\subset S^4$ be a local minimal hypersurface with three distinct principal curvatures and constant scalar curvature. At a critical point $p$ of $f=\tr (A^3)$, after fixing a principal orthonormal basis of $T_pM$, by using  \eqref{human4} and the first equation of \eqref{humantrA2}, we have $T_{ijk}=t\tau_{ijk}$ for $t\in \mathbb{R}$, where $\tau_{ijk}=1$ when the three indices are distinct and is zero otherwise, and $t^2=C/6$. Let $I=(i,j,k)$, and define
\[
\alpha_I=9-S-\sum_{s=1}^3\lambda_{I_s}^2
+2\sum_{s<r}\lambda_{I_s}\lambda_{I_r},
\]
where $I_s$ denotes the $s$-th index of $I$. \eqref{humanxie} gives that
\begin{align*}
	\nabla_aR_{kl ib}
	&=t\left(\tau_{lia}\lambda_k\delta_{kb}
	+\tau_{kba}\lambda_l\delta_{li}
	-\tau_{kia}\lambda_l\delta_{lb}
	-\tau_{lba}\lambda_k\delta_{ki}\right).
\end{align*}
Differentiating the tensor form of \eqref{found:lapT} and applying \eqref{found:lapcomm} to $K=T$ gives
\begin{equation}\label{found:criticalDeltaU}
	\begin{aligned}
		(\Delta U)_{Ic}=&\nabla_c(\Delta T)_I+\left(\Delta\nabla T-\nabla\Delta T\right)_{Ic}\\
		=&\alpha_I U_{Ic}
		+t^2\sum_{s,b}
		\left(-\lambda_{I_s}-\lambda_b
		+2\sum_{r\ne s}\lambda_{I_r}\right)
		\tau_{I_sbc}\tau_{I(s\mapsto b)}\\
		&+\sum_{a,b,s}\left[
		t(\nabla_aR_{caI_sb})\tau_{I(s\mapsto b)}
		+2 R_{caI_sb}U_{I(s\mapsto b),a}\right]
		+\sum_{a,b} R_{caab}U_{Ib},
	\end{aligned}
\end{equation}
where all sums run over $\{1,2,3\}$. This equation involves only $\lambda_i$, $t^2$, and $U$, and it determines the contraction $\langle U,\Delta U\rangle$, which will be used in the proof of Lemma \ref{lem:critical}.

\subsection{Three crucial lemmas}\label{sec:inputs}

By definition, a normalized tensor triple means all the components of
$
(A,T,U)=(A,\nabla A,\nabla^2A)
$
at a chosen point $p$, after fixing a unit normal and an orthonormal tangent
basis of $T_pM$. Equivalently, a normalized tensor triple can be regarded as an element of  
$$\Sym^2(\mathbb{R}^3)^*\oplus\Sym^3(\mathbb{R}^3)^*\oplus(\Sym^3(\mathbb{R}^3)^*\otimes (\mathbb{R}^3)^*),$$where $\Sym^k(\mathbb{R}^3)^*$ denotes the space of symmetric covariant
$k$-tensors on $\mathbb{R}^3$.
A limit \((A_\infty,T_\infty,U_\infty)\) of normalized tensor triples is a componentwise limit
\[
(A_n,T_n,U_n)|_{p_n}\longrightarrow(A_\infty,T_\infty,U_\infty),
\]
where $A_n,T_n,U_n$ are obtained from a sequence of local pointed minimal hypersurfaces $(M_n^3, p_n)$. Similarly, we can also define the normalized quadruple $
(A,T,U,V)=(A,\nabla A,\nabla^2A,\nabla^3A)
$, and also the limit of normalized tensor quadruples.

The next lemma shows that the property of three distinct principal curvatures is preserved under limits of normalized tensor triples. 
\begin{lemma}\label{lem:repeated}
For fixed $S_0>3$ and a family of local minimal hypersurfaces $M_n^3\subset S^4$ with constant $|A_n|^2=S_0$, choose points \(p_n\in M_n\).  If  the sequence of normalized triples $(A_n,T_n,U_n)|_{p_n}$ converges to a limit $(A_\infty,T_\infty,U_\infty)$, then $A_\infty$ has three distinct eigenvalues.
\end{lemma}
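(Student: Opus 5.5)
The idea is to use only pointwise algebraic identities from Section~\ref{app:identities}. None of them assumes distinct principal curvatures, and all of them are closed conditions on the normalized triple, so they pass to the limit. Then we show they are incompatible with a repeated eigenvalue when $S_0>3$.

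\textbf{Limit identities.} For each $n$, at $p_n$, the following hold:
\begin{itemize}
\item $\tr A_n=0$ and $|A_n|^2=S_0$;
\item $T_n$ is totally symmetric, with $\sum_i (T_n)_{iik}=0$ and $\sum_i\lambda_i(T_n)_{iik}=0$ (in any orthonormal frame, i.e.\ $\tr(\nabla_kA)=\tr(A\nabla_kA)=0$);
\item $|T_n|^2=S_0(S_0-3)$;
\item $\sum_i (U_n)_{iikl}=0$ and the tensor form of \eqref{human2}, $\sum_{a,b}h_{ab}U_{abij}=-Q_{ij}$;
\item the Ricci commutation \eqref{found:comm} for $U$, in tensor form with curvature given by \eqref{found:Gauss};
\item the Bochner identities \eqref{found:simplified} and \eqref{found:weighted}.
\end{itemize}
These are polynomial identities in $(A,T,U)$ with $S=S_0$ fixed, so they hold for $(A_\infty,T_\infty,U_\infty)$. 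After rotating the orthonormal basis, which is harmless since all identities are $O(3)$-equivariant, we may assume $A_\infty$ is diagonal.

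\textbf{Repeated eigenvalue.} Suppose $\lambda_1=\lambda_2=\mu$ and $\lambda_3=-2\mu$, so $S_0=6\mu^2$ and $\mu\neq0$. From $\mu(T_{11k}+T_{22k})-2\mu T_{33k}=0$ and $T_{11k}+T_{22k}+T_{33k}=0$ we get $T_{33k}=0$ and $T_{11k}=-T_{22k}$ for all $k$. Together with total symmetry, $T$ is determined by the few components $T_{111},T_{112},T_{113},T_{123}$. Thus $C=|T|^2$, $\mathcal A$, $E$, $Q_{ij}$ and $\mathcal P$ become explicit quadratic forms in these components.

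Similarly, \eqref{human2} combined with $\sum_iU_{iiij}$-tracelessness gives $U_{33ij}=Q_{ij}/(3\mu)$. The Ricci identity \eqref{human1} then fixes $U_{ij33}$ and the other commuted entries. As a check, tracing this reproduces $C=6\mu^2(S_0-3)=S_0(S_0-3)$, so the trace alone yields no contradiction. The contradiction must come from finer components.

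\textbf{Closing the argument.}
\begin{enumerate}
\item Bound $|U|^2$ from below by the sum of squares of the forced entries $U_{33ij}=Q_{ij}/(3\mu)$ and their Ricci-commuted partners $U_{ij33}$, $U_{3i3j}$, etc., written in terms of $T$.
\item Compare this bound with the right-hand side $15\mathcal A-(2S_0+9)C$ of \eqref{found:simplified}. With $T_{33k}=0$ we get $\mathcal A=\mu^2C+3\mu^2\sum_{j,k}T_{3jk}^2$, which is again explicit.
\item Use the weighted identity \eqref{found:weighted}. There $\mathcal W$ splits as $\mu\sum_{l\le2}|U_{\cdot\cdot\cdot l}|^2-2\mu|U_{\cdot\cdot\cdot3}|^2$, and $f=3\lambda_1\lambda_2\lambda_3=-6\mu^3$. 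This identity should supply the missing relation among the components of $T$ and the free components of $U$.
\end{enumerate}
The goal is an inequality of the form $c(S_0-3)C\le 0$ with $c>0$. Since $C=S_0(S_0-3)>0$, this contradicts $S_0>3$.

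\textbf{Main obstacle.} The hardest step is extracting a genuine contradiction from this finite algebraic system, since the traced identities are automatically consistent. I would first try eliminating $U$ between \eqref{found:simplified} and \eqref{found:weighted}: subtract a suitable multiple $\kappa\mu|U|^2$ from $\mathcal W$ so that the remaining $U$-dependence is sign-definite. What is left is then a quadratic form in $(T_{111},T_{112},T_{113},T_{123})$, to be checked for definiteness.

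If that fails, the fallback is to use the totally symmetric tensor $V=\nabla^3A$. Its Ricci commutators, such as differentiating \eqref{human2} and \eqref{human1} once more, should give additional algebraic constraints. This mirrors Chang's argument in \cite{ChangLocal}, but run purely at the level of jets.
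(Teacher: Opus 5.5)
Your setup is correct and matches the paper's: the closed identities passing to the limit, $T_{33k}=0$, $T_{11k}=-T_{22k}$, $U_{33ij}=Q_{ij}/(3\mu)$, $\mathcal A=\mu^2C+3\mu^2\sum_{j,k}T_{3jk}^2$, $\mathcal W=\mu|U|^2-3\mu N_3$ with $N_3=\sum_{i,j,k}U_{ijk3}^2$, and $f=-6\mu^3$. But the argument stops where the content of the lemma begins. The contradiction is only announced (``should supply'', ``I would first try'', a fallback via $V$ ``if that fails''), so there is a genuine gap, and it is not routine. Your steps 1--2 cannot close it. Take the following data:
\begin{itemize}
\item $T_{111}=T_{112}=T_{123}=0$ and $T_{113}=c$ with $6c^2=S_0(S_0-3)$;
\item $U_{1111}=U_{2222}=U_{1112}=0$, $U_{1223}=-U_{1113}$, $U_{2223}=-U_{1123}$, with $\mu^2(U_{1113}^2+U_{1123}^2)=\tfrac{13}{18}c^4$;
\item all other entries of $U$ as forced by the trace, \eqref{human2} and Ricci relations.
\end{itemize}
Then every identity on your list except \eqref{found:weighted} holds. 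So no lower bound on $|U|^2$ compared against \eqref{found:simplified} can give a contradiction. Note also that $\mathcal P$ is not a form in $T$ alone: it contains the undetermined entry $U_{1111}$, and that term is what makes the final square close.

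What is missing is the actual computation, which the paper carries out as follows.
\begin{enumerate}
\item Rotate within the $\mu$-eigenspace to diagonalize the traceless block $(T_{ij3})_{i,j\le 2}$, so that $T_{123}=0$. Put $a=T_{113}^2$ and $b=T_{111}^2+T_{112}^2$, so $C=6a+4b$.
\item Parametrize all of $U$. Apart from forced entries, the only unknowns are $x=U_{1111}=U_{2222}$, $y=U_{1112}=-U_{1222}$, and $U_{1113},U_{1123},U_{2213},U_{2223}$. The last four enter only through $Z=\mu^2(3U_{1123}^2+3U_{2213}^2+U_{1113}^2+U_{2223}^2)$, and one finds
\begin{gather*}
\mu^2|U|^2=4Z+8\mu^2(x^2+y^2)+8\mu(a+b)x+\tfrac{58a^2+112ab+48b^2}{9},\\
\mu^2N_3=Z+\tfrac{10a^2+12ab}{9},\qquad \mu\mathcal P=4a\mu x+\tfrac{8a^2-4b^2}{3}.
\end{gather*}
\item Since $Z$ enters $|U|^2$ and $N_3$ in ratio $4:1$, your $\kappa$ must be $\tfrac14$. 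Concretely, let $R_o=|U|^2-15\mathcal A+(2S_0+9)C$ and let $R_w$ be the left side of \eqref{found:weighted}. The combination $-\tfrac{\mu^2}{12}R_o+\tfrac{\mu}{3}R_w$ has $U$-quadratic part $\tfrac{\mu}{3}(\mathcal W-\tfrac{\mu}{4}|U|^2)$. In it, $Z$ cancels exactly rather than being bounded.
\item The $x$-terms coming from $|U|^2$ and from $\mathcal P$ combine into $2(\mu x-\tfrac{a-b}{2})^2$.
\item Substituting $S_0=6\mu^2$ and $6a+4b=S_0(S_0-3)$ gives \eqref{rep:squares}. This is a sum of squares plus a polynomial in $a,b\ge0$ with positive coefficients, not a quadratic form in $T$ of uncertain sign. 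Hence $a=b=0$, contradicting $C>0$.
\end{enumerate}
No third derivatives are needed, so your fallback via $V$ is unnecessary.
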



\begin{proof}
	The proof is essentially due to Chang~\cite{ChangLocal}. 
	
	First, let $M^3$ be a minimal hypersurface in $S^4$ with constant  $|A|^2=S_0>3$. We will show that $M^3$ has three distinct principal curvatures at every point. Assume there exists a point in $M^3$ such that $A=\operatorname{diag}(\lambda,\lambda,-2\lambda)$ with $S_0=6\lambda^2>3$ at this point by fixing a principal orthonormal basis. Reverse the unit normal of $M^3$ if necessary so that
	$\lambda>0$. The symmetry of $T$, the first
	equation of \eqref{human4} and \eqref{humantrA2} together imply, after a
	rotation of the multiple $\lambda$-eigenspace,
	\[
	T_{111}=-T_{122}=p,\quad T_{112}=-T_{222}=q,\quad
	T_{113}=-T_{223}=c,\quad T_{123}=0,
	\]
for some $p,q,c\in \mathbb{R}$.	Any other component follows from the symmetry of $T$ or vanishes. Define
	\[
	a:=c^2,\quad b:=p^2+q^2.
	\]
	By \eqref{found:Simons}, $C=6a+4b$. Write
	\[
	x=U_{1111}=U_{2222},\quad y=U_{1112},\quad
	Z=\lambda^2\bigl(3U_{1123}^2+3U_{2213}^2+U_{1113}^2+U_{2223}^2\bigr).
	\]
	The equality defining $x$ follows from the trace equations \eqref{humantrA2},\eqref{human2}  and Ricci
	identity \eqref{human1}, which also imply 
	\[
	\begin{gathered}
		U_{3311}=U_{3322}=\frac{2(a+b)}{3\lambda},\quad
		U_{1133}=U_{2233}=-\frac{a}{3\lambda},\quad
		U_{3333}=\frac{2a}{3\lambda},\quad U_{3312}=0,\\
		U_{3313}=\frac{2pc}{3\lambda},\quad
		U_{3323}=\frac{2qc}{3\lambda},\quad
		U_{1122}=-x-\frac{2(a+b)}{3\lambda},\quad U_{1222}=-y.
	\end{gathered}
	\]
	The remaining components satisfy the trace relations and are symmetric
	under all permutations, except those with index multiset $\{1,1,3,3\}$
	or $\{2,2,3,3\}$, whose two values have just been displayed.
	Set
	\[
	N=|U|^2,\quad N_3=\sum_{i,j,k}U_{ijk3}^2,\quad
	\mathcal P=\sum_{i,j,k,l,m}T_{ijk}T_{klm}U_{ijlm}.
	\]
	Substituting the above component relations into the definitions of
	$N$, $N_3$, and $\mathcal P$, with the remaining unrestricted components in $Z$,
	gives
	\begin{align*}
		&\lambda^2N=4Z+8\lambda^2(x^2+y^2)+8\lambda(a+b)x
		+\frac{58a^2+112ab+48b^2}{9},\\
		&\lambda^2N_3=Z+\frac{10a^2+12ab}{9},\\
		&\lambda\mathcal P=4a\lambda x+\frac{8a^2-4b^2}{3}.
	\end{align*}
	Using the ordinary and weighted Bochner-type identities
	\eqref{found:simplified} and \eqref{found:weighted}, we obtain
	\[
	N=15\mathcal A-(2S_0+9)C,\quad
	\mathcal W-3\mathcal P+\tfrac32(S_0+2)E-3fC=0.
	\]
	At the chosen point with multiple principal curvatures,
	\[
	\mathcal A=\lambda^2(12a+4b),\quad
	\mathcal W=\lambda N-3\lambda N_3,\quad E=4\lambda b,
	\quad f=-6\lambda^3.
	\]
	In particular the ordinary Bochner-type identity becomes $N=3(S_0-3)C-10S_0b$.
	For an explicit cancellation, set
	\[
	R_o=N-15\mathcal A+(2S_0+9)C,\quad
	R_w=\mathcal W-3\mathcal P+\tfrac32(S_0+2)E-3fC.
	\]
	The combination $-\lambda^2R_o/12+\lambda R_w/3$ cancels all terms
	containing $Z$. Substitution of $S_0=6\lambda^2$ and
	$S_0(S_0-3)=6a+4b$, followed by completing one square, gives
	\begin{equation}\label{rep:squares}
		\begin{split}
			0={}&2\left(\lambda x-\frac{a-b}{2}\right)^2+2\lambda^2y^2
			+\frac{29}{6}a^2+\frac{251}{18}ab+\frac{113}{18}b^2
			+3S_0a+\frac{13}{4}S_0b.
		\end{split}
	\end{equation}
	Every term is nonnegative. Thus $a=b=0$, contradicting
	$6a+4b=S_0(S_0-3)>0$. 
	
	Next, let
$(A_n,T_n,U_n)|_{p_n}\longrightarrow(A_\infty,T_\infty,U_\infty). $
	We claim that $A_\infty$ also has three distinct eigenvalues. Indeed, all tensor identities used above are finite algebraic identities in $A,T,U$ (with the curvature terms expressed algebraically in $A$ by the Gauss equation), and hence they pass componentwise to the limiting triple. Suppose that $A_\infty$ has a multiple eigenvalue. Since
	\[\operatorname{tr}A_\infty=0,\quad |A_\infty|^2=S_0>3,\]
	after one fixes an orthonormal basis of $\mathbb{R}^3$ and, if necessary, reverses of the normal, we may write
	\[ A_\infty=\operatorname{diag}(\lambda,\lambda,-2\lambda),
	 \quad \lambda>0,\quad 6\lambda^2=S_0.\]
	Applying an orthogonal rotation in the multiple $\lambda$-eigenspace, one changes $T_\infty$ in the normal form used above. Since the limiting triple satisfies the same trace, Ricci, ordinary Bochner-type, and weighted Bochner-type identities, the preceding algebraic calculation applies verbatim and yields \eqref{rep:squares}. Consequently $a=b=0$, whereas the limiting Simons' identity gives
	\[6a+4b=|T_\infty|^2=S_0(S_0-3)>0,\]
	and we get a contradiction. Thus $A_\infty$ has three distinct eigenvalues.
\end{proof}

The next lemma shows that the rigidity at a critical point of the function $f$ defined in \eqref{human3}.
\begin{lemma}\label{lem:critical}
Let $M^3\subset S^4$ be a local minimal hypersurface with three distinct principal curvatures at every point and constant $|A|^2=S>3$. If there exists a point $p$ satisfying
$df(p)=0$, then  $f(p)=0$ and $S=6$.
\end{lemma}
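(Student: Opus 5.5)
At the critical point $p$ we fix a principal orthonormal basis, so that $A=\operatorname{diag}(\lambda_1,\lambda_2,\lambda_3)$ with distinct $\lambda_i$ and $T_{ijk}=t\tau_{ijk}$ with $t^2=C/6=S(S-3)/6>0$, as recorded in the subsection on critical points. The plan is to extract enough scalar identities in $(\lambda_i,t^2,U)$ at $p$ to force $f(p)=0$ and then $S=6$, mimicking the sum-of-squares argument in the proof of Lemma~\ref{lem:repeated}.

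\textbf{Step 1: normal form for $U$.} With $T=t\tau$, compute $Q_{ij}=\sum_{a,b}T_{abi}T_{abj}=2t^2\delta_{ij}$. Then the trace relations \eqref{humantrA2}, \eqref{human2} (namely $\sum_k\lambda_kU_{kkij}=-2t^2\delta_{ij}$), the Ricci identity \eqref{human1}, and the full commutation formula \eqref{found:comm} (which expresses $U_{ijkl}-U_{ijlk}$ through curvature times $T$) are all linear in $U$. Solving them determines the ``diagonal'' components $U_{iijj}$, $U_{iiii}$ in terms of $\lambda$'s and $t^2$. It leaves free only a small number of genuinely undetermined parameters, namely the mixed components such as $U_{iiij}$ and $U_{ijkk}$. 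I expect the Vandermonde-type matrix with rows $(1,1,1)$ and $(\lambda_1,\lambda_2,\lambda_3)$ to give unique solutions because the $\lambda_i$ are distinct.

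\textbf{Step 2: scalar constraints.} Evaluate at $p$ the quantities $\mathcal A=2t^2\sum_{i}\lambda_i^2=2t^2S$, $\mathcal B=2t^2\sum_{i\ne j}\lambda_i\lambda_j=-2t^2 S$, $E=0$, and $\mathcal D$; note that \eqref{found:contractions} then forces a consistency check, since $\mathcal D=t^2\sum_{i\neq j}\lambda_i^2\lambda_j=-3t^2\lambda_1\lambda_2\lambda_3=-t^2f$ agrees with $-fC/6$. Substitute into the ordinary identity \eqref{found:simplified}, which gives $|U|^2=(30S-12S-54)t^2$, and into the weighted identity \eqref{found:weighted}, which gives $\mathcal W-3\mathcal P=3fC$. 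Add a third, new identity from $\tfrac12\Delta|U|^2$ expressed through \eqref{found:criticalDeltaU}. That identity involves $\langle U,\Delta U\rangle$, and also $|V|^2$, which is the obstacle. Rather than use it directly, take the $\Delta$ of the Bochner identities once more, or, better, exploit that $df(p)=0$ together with $\Delta f$ and the Hessian of $f$ computed from \eqref{eq:main-cubic}: $\nabla^2 f = 3\sum\lambda_i^2U_{ii\cdot\cdot}+6\sum\lambda_iT_{i\cdot\cdot}T_{i\cdot\cdot}$. This gives additional linear relations in $U$.

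\textbf{Step 3: combine.} As in Lemma~\ref{lem:repeated}, take the linear combination of the ordinary and weighted identities, with $\lambda$-dependent weights, that cancels the free quadratic terms in $U$. The expected outcome is an identity of the form $(\text{sum of squares})+c\,f^2 t^2=0$ with $c>0$, forcing $f(p)=0$ and the free $U$-components to vanish. Then $\lambda=(\mu,0,-\mu)$ with $S=2\mu^2$. Substituting back into the remaining identity, the ordinary one together with the now fully determined $U$, should yield a polynomial equation in $S$ whose only root with $S>3$ is $S=6$.

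The main obstacle is finding the cancelling combination and verifying positivity, which a priori may need the third identity from $\langle U,\Delta U\rangle$ via \eqref{found:criticalDeltaU}. If $|V|^2$ appears there, I would first try to bound it below by the part of $V$ determined by commutators (Ricci identities applied to $U$) and symmetrization, to keep only sign-definite terms.
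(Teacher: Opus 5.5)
Your plan breaks down at Step~3. The ordinary and weighted identities at $p$ cannot force $f(p)=0$, so the cancelling combination you hope for does not exist.

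First, Step~1 is not right as stated. You also need $\nabla|T|^2=0$, i.e.\ $U_{123l}=0$. With it, all mixed patterns $U_{iiij}$, $U_{iijk}$ ($i,j,k$ distinct) vanish. The diagonal block $(U_{iiii},U_{iijj})$ is \emph{not} uniquely determined: its linear system is singular and leaves exactly one free parameter $d$.

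The two identities then become two scalar equations in $(S,f,d)$. With $h=3/S$, $y=f^2/S^3$, $x=-3Sd/f$ they read
\[
y\bigl[x^2-(8-2h)x-3\bigr]=(7-5h)(1-2h),\qquad 6yx(x+2)-(19-11h)x+129-126h+6h^2=0 .
\]
These admit real solutions with $f\neq0$ and three distinct principal curvatures. For $S=30$ one finds $x\approx12.3$ and $y\approx0.098<1/6$, i.e.\ $0<f^2<S^3/6$. Every combination of the two identities vanishes on such data, so none can take the form (sum of squares)$+cf^2t^2$ with $c>0$.

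Your alternative source of relations gives nothing. $df(p)=0$ is a pointwise hypothesis and says nothing about $\nabla^2 f(p)$; writing $\nabla^2 f$ in terms of $U$ and $T$ only defines it. Two minor slips: $\mathcal B=-St^2$, not $-2St^2$ (compare \eqref{found:contractions1}), and $U_{ijkl}-U_{ijlk}$ involves curvature times $A$, not $T$.

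What is missing is the third-order step you set aside. When $f\neq0$, the paper extracts from the two scalar equations only the bound $S>5$, by eliminating $y$ and analysing the resulting cubic in $x$. It then applies $\Delta$ to \eqref{found:simplified}, which holds on a neighborhood of $p$, to get $0=|V|^2+\langle U,\Delta U\rangle-\tfrac{15}{2}\Delta\mathcal A$. Each term is evaluated exactly in terms of $(S,f,d)$:
\begin{itemize}
\item $\Delta\mathcal A$ comes from the neighborhood identity $\frac{SC}{3}-\mathcal A=2\sum_r(\lambda_r^2+\frac S6)T_{rrr}^2$ together with $T_{rrr}(p)=0$.
\item $|V|^2$ comes from $\nabla^2|T|^2=0$, which gives $T_{123;ii}=-N_i/(6t)$, together with the Ricci identities; these determine all relevant components of $V$.
\item $\langle U,\Delta U\rangle$ comes from \eqref{found:criticalDeltaU}.
\end{itemize}
Only this produces the sum-of-squares identity \eqref{crit:main-sos}, and its positivity uses $S>5$.

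Your remark about bounding $|V|^2$ from below points the right way, since $|V|^2$ enters with a plus sign. But without the explicit handle on $\Delta\mathcal A$ and the preliminary bound $S>5$ it is not yet an argument. Your final step is fine: once $f=0$, the weighted identity gives $d=0$ and the ordinary one gives $S=6$.
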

\begin{proof}
Unless otherwise stated, all tensor components below are evaluated
at $p$ in a principal orthonormal basis. Since $df(p)=0$, 
	the three trace equations \eqref{human4}, and the first equation of \eqref{humantrA2},  imply that $T_{iik}=0$ for $i,k=1,2,3$. By Codazzi equation, only the components with three distinct
	indices remain. Thus if we wirte $T_{123}=t$, then $C=|T|^2=6t^2$.
  By the equation $\nabla |T|^2=0$, the second trace equations \eqref{humantrA2},\eqref{human2}, and Ricci
identity \eqref{human1},  $U$ is determined up to some $d\in \mathbb{R}$:
	\begin{align}
		&a_i:=U_{iiii}=\frac{6-S}{6}\lambda_i-\frac f3
		+d\left(\lambda_i^2-\frac{2S}{3}\right),\label{crit:ai}\\
		&\frac{U_{iijj}+U_{jjii}}2
		=\frac f6+\frac{2S-3}{6}\lambda_k
		+d\left(\lambda_k^2-\frac S6\right),\label{crit:U}\\
		&U_{iijj}-U_{jjii}=(\lambda_i-\lambda_j)(1+\lambda_i\lambda_j).
		\label{crit:skew}
	\end{align}
	Here $i,j,k$ are distinct. Moreover, the preceding
	equations imply that all multiplicity patterns not appearing in
	\eqref{crit:ai}--\eqref{crit:skew} vanish. The ordinary and weighted
	Bochner-type identities \eqref{found:simplified} and \eqref{found:weighted} become
	\begin{align}
		B:={}&9S^2d^2+6(4S-3)fd-3f^2-S(7S-15)(S-6)=0,
		\label{crit:B}\\
		W:={}&18Sf d^2+[S^2(19S-33)-12f^2]d
		+f(43S^2-126S+18)=0.\label{crit:W}
	\end{align}
	These are the only two initial scalar equations needed.
	Since $A$ has three distinct principal curvatures, we have
	\begin{equation}
		0\le f^2<\frac{S^3}{6},\quad
		D:=\frac{S^2}{6}-\frac{f^2}{S}>0.
		\label{crit:disc}
	\end{equation}
	Indeed $\prod_{i<j}(\lambda_i-\lambda_j)^2=S^3/2-3f^2$.
	If $f=0$, equation \eqref{crit:W} gives $d=0$, and then
	\eqref{crit:B} gives $S=6$. 
	
	 Now assume $f\ne0$, and we will prove that $S>5$. Set
	\[
	h=\frac3S\in(0,1),\quad y=\frac{f^2}{S^3}\in(0,1/6),
	\quad x=-\frac{3Sd}{f}.
	\]
	We note that in \eqref{crit:W}, the coefficient $S^2(19S-33)-12f^2$ of $d$ is positive by \eqref{crit:disc}, and the coefficient $18Sd^2+43S^2-126S+18$ of $f$ also positive for $S>3$, which implies $fd<0$, hence $x>0$.
	After division by positive powers of $S$, \eqref{crit:B}--\eqref{crit:W} have the following forms:
	\begin{align}
		&y[x^2-(8-2h)x-3]=(7-5h)(1-2h),\label{crit:Bn}\\
		&6yx(x+2)-(19-11h)x+129-126h+6h^2=0.\label{crit:Wn}
	\end{align}
	Eliminating $y$ by cross-multiplication yields
	\begin{equation}
		\begin{split}
			0=\mathcal C(h,x)={}&(19-11h)x^3+(-323+366h-88h^2)x^2\\
			&+(891-1005h+180h^2-12h^3)x+387-378h+18h^2.
		\end{split}\label{crit:cubic}
	\end{equation}
	
	First, we claim that $x>5$. If $h\le1/2$, equation \eqref{crit:Bn} gives
	$x\ge(7+\sqrt{61})/2>5$. if $1/2<h<1$, suppose $0<x\le5$.
	The endpoint factorizations are
	\begin{align*}
		\mathcal C(1,x)&=(x-3)^2(8x+3),\\
		\mathcal C(1/2,x)&=\frac{27}{2}(5-x)
		[13+3(5-x)-(5-x)^2].
	\end{align*}
	Both are nonnegative on $[0,5]$. If $2\le x\le5$, then
	\[
	\mathcal C_{hh}(h,x)\le-176x^2+324x+36<0,
	\]
	so strict concavity in $h$ gives $\mathcal C(h,x)>0$.
	If $0<x\le2$, then
	\[
	\mathcal C_h(h,x)\le-11x^3+278x^2-681x-342<0,
	\]
	where the last inequality follows from $278x^2\le556x$.
	Hence $\mathcal C(h,x)>\mathcal C(1,x)\ge0$ in this case as well.
	Both conclusions contradict \eqref{crit:cubic}. The claim holds.
	
	Now suppose $h\ge3/5$. For $3/5\le h\le1$ and $x\ge5$,
	\begin{align*}
		&\mathcal C(h,5)=(2h-1)(858-656h-30h^2)\ge90,\\
		&\mathcal C_x(h,5)=-914+1830h-700h^2-12h^3\ge-71,\\
		&\mathcal C_{xx}(h,x)\ge-76+402h-176h^2\ge100.
	\end{align*}
	The first polynomial is concave in $h$, with endpoint values
	$2268/25$ and $172$, hence $\mathcal{C}(h,5)\geq 90$. The other two displayed polynomials increase
	on $[3/5,1]$. Thus, with $w=x-5\ge0$, Taylor's formula gives that for some $\theta\in (0,1)$,
	\[
	\mathcal C(h,5+w)=\mathcal C(h,5)+w\mathcal C_x(h,5)+\frac{1}{2}w^2\mathcal C_{xx}(h,5+\theta w)\ge90-71w+50w^2>0,
	\]
	This contradicts \eqref{crit:cubic}. Thus $h<\frac{3}{5}$ and then 
	\begin{equation} S>5.\label{crit:S5}\end{equation}
	
Next, we establish the geometric form of the differentiated identity.
	The ordinary Bochner-type identity \eqref{found:simplified} holds on a neighborhood of $p$.
	Taking its Laplacian gives the necessary identity
	\begin{equation}
		0=|V|^2+\langle U,\Delta U\rangle-\frac{15}{2}\Delta\mathcal A.
		\label{crit:next}
	\end{equation}
	A useful improvement is to express $\Delta\mathcal A$ geometrically.
	Near $p$, choose a principal orthonormal frame and put
	\[
	\nu_i:=\lambda_j-\lambda_k
	\]
	for cyclic $(i,j,k)$. Since the vectors $(1,1,1), (\lambda_1,\lambda_2,\lambda_3) $, and $(\nu_1,\nu_2,\nu_3)$ are mutually orthogonal, for each fixed $r$, from \eqref{human4} and \eqref{humantrA2} we know that there is a unique
	scalar $u_r$ such that
\begin{equation}\label{human5}
		T_{iir}=\nu_i u_r .
\end{equation}
 Write $t=T_{123}$. Then
\begin{equation}\label{humanC}
		C=6t^2+\sum_r(5S+6\lambda_r^2)u_r^2.
\end{equation}
Similarly, the $t^2$-terms in $\mathcal A-SC/3$ cancel, while the
	coefficient of $u_r^2$ is
	\[
	6\lambda_r^4-3S\lambda_r^2-\frac{2S^2}{3}
	=
	2f\lambda_r-\frac{2S^2}{3}
	=
	-2\nu_r^2\left(\lambda_r^2+\frac S6\right).
	\]
	Since $T_{rrr}=\nu_r u_r$, we obtain the identity near $p$,
	\[
	\frac{SC}{3}-\mathcal A
	=
	2\sum_r\left(\lambda_r^2+\frac S6\right)T_{rrr}^2.
	\]
	
	At the critical point $p$, the Vandermonde argument above gives
	$T_{rrr}=0$.  Taking the Laplacian of the preceding 
	identity near $p$ and evaluating at $p$ therefore yield
	\[
	-\Delta\mathcal A
	=
	4\sum_r\left(\lambda_r^2+\frac S6\right)
	|\nabla T_{rrr}|^2.
	\]
	Moreover, the vanishing of the remaining multiplicity patterns of $U$ gives
	$U_{rrrs}=0$ for $s\ne r$, while $U_{rrrr}=a_r$.  Hence
	\begin{equation}
		\Delta\mathcal A=-4\sum_i\left(\lambda_i^2+\frac S6\right)a_i^2.
		\label{crit:deltaA}
	\end{equation}
	In particular, \eqref{crit:next} becomes
	\begin{equation}
		0=|V|^2+\langle U,\Delta U\rangle
		+30\sum_i\left(\lambda_i^2+\frac S6\right)a_i^2.
		\label{crit:positive-next}
	\end{equation}
	Only identities valid near $p$ have been differentiated. The pointwise
	assumption $df(p)=0$ has never been differentiated.
	The identity still uses $V=\nabla^3A$ as necessary compatibility
	information. Its norm will be determined below from $A,T,U$ at $p$.
	
	Set $N_i=\sum_{a,b,c}U_{abci}^2=a_i^2+3\sum_{j\ne i}U_{jjii}^2$.
	The third-derivative norm is
	\begin{equation}
		\begin{split}
			|V|^2={}&\frac{10}{3C}\sum_iN_i^2
			-\frac{38}{3}\sum_i\lambda_i^2N_i+\frac{34}{3}\sum_iN_i
			+\frac C6(58S^2-212S+330).
		\end{split}\label{crit:Vnorm}
	\end{equation}
In fact, the identity $\nabla_{ii}|T|^2=0$ gives
	\[
	0=N_i+6t\,T_{123;ii}.\]
Hence	\[
		\frac{T_{123;ii}}{t}=q_i:=-\frac{N_i}{C}.
	\]
For fixed distinct $i,j,k$, the twenty ordered components of $V/t$
whose five indices consist of three copies of $i$ and one copy each of
$j$ and $k$ are determined by Ricci identity.  More precisely,
\[
\frac{V_{iiijk}}{t}
=
\frac{V_{iiikj}}{t}
=
q_i+4\lambda_i^2-5,
\]
while, up to permutations of the first three indices,
\[
\frac{V_{iijik}}{t}
=
q_i+2\lambda_i^2-2\lambda_i\lambda_k-3,
\quad
\frac{V_{iikij}}{t}
=
q_i+2\lambda_i^2-2\lambda_i\lambda_j-3,
\]
\[
\frac{V_{iijki}}{t}
=
\frac{V_{iikji}}{t}
=
q_i+2\lambda_i^2-1,
\quad
\frac{V_{ijkii}}{t}=q_i.
\]
The respective multiplicities are $2,3,3,6,6$.
	The off-diagonal equations $\nabla_{ij}|T|^2=0$ together with the
	third-order trace relations \eqref{human3}, \eqref{humantrA} eliminate the
	remaining multiplicity patterns. Therefore, after squaring the above
	components and summing over $i$,
	\[
	\frac{|V|^2}{t^2}
	=
	\sum_i\Bigl[
	20q_i^2+(76\lambda_i^2-68)q_i
	+92\lambda_i^4+(12S-212)\lambda_i^2+110
	\Bigr].
	\]
	Using
	\[
	\sum_i\lambda_i^2=S,\quad
	\sum_i\lambda_i^4=\frac{S^2}{2},
	\quad
	q_i=-\frac{N_i}{C},
	\quad
	t^2=\frac{C}{6},
	\]
proves \eqref{crit:Vnorm}.
	
Set
	\[
	\mu_i=\lambda_i^2-\frac S3-\frac fS\lambda_i,\quad
	\beta=\frac{4S^2+6S-18+12fd-12Sd^2}{9C}.
	\]
	The vectors $(1,1,1), (\lambda_1,\lambda_2,\lambda_3) $, and $(\mu_1,\mu_2,\mu_3) $ are orthogonal in $\mathbb{R}^3$ by \eqref{eq:main-cubic},
	with squared norms $3,S,D$, respectively. The Bochner-type identities \eqref{found:ordinary}, \eqref{found:weighted} imply
	\[
	\sum_iN_i=3C(S-3),\quad
	\sum_i\lambda_iN_i=C(Sd+4f),
	\]
	where for the second equation, one uses
	$\mathcal P=\sum_{i,j,k,a,b}T_{ijk}T_{abk}U_{ijab}=C(f+Sd)/3$,
	which is obtained directly from \eqref{crit:ai}--\eqref{crit:skew}.
	Observe that the coefficient of
	$\lambda_i^2$ in $N_i$ is $C\beta$. Consequently, by reducing powers of $\lambda_i$ with
	$\lambda_i^3=(S/2)\lambda_i+f/3$,
	\begin{equation}\label{crit:Ni-projection}
		\frac{N_i}{C}=S-3+\left(d+\frac{4f}{S}\right)\lambda_i+\beta\mu_i.
	\end{equation}
This is the orthogonal decomposition of $(N_1,N_2,N_3)$ in $\mathbb R^3$.
	Substituting it into \eqref{crit:Vnorm} gives
	\begin{equation}\label{crit:V-projection}
		\frac{|V|^2}{C}
		=7S^2-\frac{70}{3}S+43+\frac{10S}{3}d^2+14fd
		+\frac{8f^2}{3S}+\frac D3(10\beta^2-38\beta).
	\end{equation}
	The two remaining contractions in \eqref{crit:positive-next}
are
\begin{equation}\label{human23}
	\begin{aligned}
		\langle U,\Delta U\rangle={}&
		-5S^4+\frac{257}{9}S^3-\frac{137}{3}S^2+29S
		+\left(\frac{118}{9}S-\frac{38}{3}\right)f^2\\
		&+\left(-\frac{56}{9}S^2+\frac{340}{3}S-68\right)fd
		+\left(-\frac{22}{9}S^3+34S^2+\frac{56}{3}f^2\right)d^2,\\
		G={}&\frac{S^2(S-6)^2}{54}
		+\left(\frac{5S}{18}-\frac23\right)f^2
		+\frac{S(S+6)}9fd+\left(\frac{S^3}{9}+\frac{f^2}{3}\right)d^2,
	\end{aligned}
	\end{equation}
		where $G=\sum_i(\lambda_i^2+S/6)a_i^2$. For the first contraction, we use \eqref{found:criticalDeltaU}. For the second contraction, one simply
	squares \eqref{crit:ai}.
Write
	\[
	e=Sd,\quad y=\frac{f^2}{S^3},\quad
	\alpha=(50+186y)S+216,
	\]
	\[
	\gamma=(88+72y)S+822-\frac{612}{S},\quad
	\zeta=271S-420-\frac{108}{S}.
	\]
	Substituting \eqref{crit:V-projection}, \eqref{human23} into  \eqref{crit:positive-next} gives the sum of squares identity
	\begin{equation}\label{crit:main-sos}
			0=30CD(\beta-1)^2+S(14S-25)(S-6)^2
			+\alpha e^2+\gamma ef+\zeta f^2.
	\end{equation}
	Complete the last quadratic form as
	\[
	\alpha e^2+\gamma ef+\zeta f^2
	=\frac\gamma3\left(e+\frac32f\right)^2
	+\kappa e^2+\eta f^2,
	\]
	where
	\[
	\kappa=\left(\frac{62}{3}+162y\right)S-58+\frac{204}{S},
	\quad
	\eta=(205-54y)S-\frac{2073}{2}+\frac{351}{S}.
	\]
	For $S>5$ and $0\le y<1/6$, both $\gamma$ and $\kappa$ are positive,
	and
	\[
	2S\eta\ge (S-5)(392S-113)+137>0.
	\]
	Every term in \eqref{crit:main-sos} is therefore nonnegative, with
	strictly positive coefficients of $e^2$ and $f^2$. It follows that $f=e=0$, contradicting $f\ne0$. Thus $f=0$. Therefore,
	\eqref{crit:B} and \eqref{crit:W} then give $d=0$
	and $S=6$.
\end{proof}

The next lemma establishes the key nondegeneracy needed in the proof of Proposition \ref{spacedec}.
\begin{lemma}\label{lem:no-complex-characteristic}
Fix $S_0>3$. Let $M^3\subset S^4$ be a local minimal hypersurface with constant $|A|^2=S_0$. Given $p\in M^3$ and a principal orthonormal basis $e_1, e_2, e_3$ of $T_pM$, write $A(e_i)=\lambda_ie_i$.  At the point $p$, the components of $A$ and $T$ determine three homogeneous polynomials $q_0,q_1,q_2$ by
	\begin{align*}
		q_0(\xi)=\sum_i\xi_i^2,\quad
		q_1(\xi)=\sum_i\lambda_i\xi_i^2,\quad
		q_2(\xi)=\sum_{i,j,k}T_{ijk}\xi_i\xi_j\xi_k,\quad \xi=(\xi_1,\xi_2,\xi_3)\in \mathbb{C}^3.
	\end{align*}
	Then $q_0(\xi)=q_1(\xi)=q_2(\xi)=0$ implies $\xi=0$. Moreover, the same conclusion holds for the polynomials \(q_0^\infty,q_1^\infty,q_2^\infty\) associated with the limiting tensors \(A_\infty,T_\infty\), whenever
	\[
	(A_n,T_n,U_n,V_n)|_{p_n}\longrightarrow
	(A_\infty,T_\infty,U_\infty,V_\infty)
	\]
	is a convergent sequence of normalized tensor quadruples arising from pointed local minimal hypersurfaces $(M^3_n, p_n)$ satisfying \(|A_n|^2\equiv S_0\).
\end{lemma}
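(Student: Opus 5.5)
The plan is to make the common zero set of $q_0,q_1$ completely explicit, and then show that $q_2$ cannot vanish on it. By Lemma \ref{lem:repeated}, and by its first part for the smooth case, the eigenvalues $\lambda_1,\lambda_2,\lambda_3$ are distinct, both at $p$ and in the limit. So the vectors $(1,1,1)$, $(\lambda_1,\lambda_2,\lambda_3)$ and $\nu=(\nu_1,\nu_2,\nu_3)$, with $\nu_i=\lambda_j-\lambda_k$ for cyclic $(i,j,k)$, form an orthogonal basis of $\mathbb R^3$, and every $\nu_i\neq0$. The equations $q_0(\xi)=q_1(\xi)=0$ say that the complex vector $(\xi_1^2,\xi_2^2,\xi_3^2)$ is orthogonal to the first two, so $\xi_i^2=c\,\nu_i$ for a single $c\in\mathbb C$. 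If $c=0$ then $\xi=0$. Otherwise, after scaling, $\xi=(\varepsilon_1s_1,\varepsilon_2s_2,\varepsilon_3s_3)$ with fixed square roots $s_i^2=\nu_i$ and signs $\varepsilon_i=\pm1$. Thus the zero set of $q_0,q_1$ is the cone over four projective points (signs modulo an overall sign), and it suffices to check $q_2\neq0$ at these four points.

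Next I would write $T$ in the normal form \eqref{human5}: $T_{iir}=\nu_iu_r$ and $T_{123}=t$, which holds in the limit as well, since it uses only the trace relations \eqref{human4} and \eqref{humantrA2}. Expanding $q_2$ and using $\xi_i^3=c\nu_i\xi_i$ gives
\[
q_2(\xi)=c\sum_j\bigl(3|\nu|^2-2\nu_j^2\bigr)u_j\,\xi_j+6t\,\xi_1\xi_2\xi_3 .
\]
The first term is odd in each $\varepsilon_j$ separately, and the second term carries the factor $\varepsilon_1\varepsilon_2\varepsilon_3$. So vanishing at one of the four points is a single real-linear relation between $t$ and the $u_j$, and vanishing at two of them forces the $u$-part and the $t$-part to vanish separately. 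By \eqref{humanC}, $C=6t^2+\sum_r(5S+6\lambda_r^2)u_r^2=S_0(S_0-3)>0$, so not all of $t,u_r$ are zero. This already excludes the degenerate configurations, such as $u=0$ or $t=0$ with only one $u_r\neq0$.

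The hard part is the remaining case: a single linear relation $\sum_j(3|\nu|^2-2\nu_j^2)u_js_j\varepsilon_j=\mp 6t\,s_1s_2s_3$. The algebraic data $(A,T)$ alone do not forbid it, so this step must use more of the geometry. Here I would use the second- and third-order compatibility conditions, which is why the lemma is stated for quadruples $(A,T,U,V)$:
\begin{itemize}
\item the trace relations \eqref{humantrA2} and \eqref{human2}, together with the Ricci identity \eqref{human1};
\item the ordinary and weighted Bochner identities \eqref{found:simplified} and \eqref{found:weighted};
\item the consequences of differentiating these once more, as in the proof of Lemma \ref{lem:critical}.
\end{itemize}
The aim is to express the offending linear relation, through these identities, as a sum of squares that is forced to vanish, as in \eqref{rep:squares} and \eqref{crit:main-sos}. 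That would contradict $C>0$.

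I expect this elimination to be the main obstacle. It requires parametrising the free components of $U$ (and, if needed, $V$) in terms of $t,u_r$ and the $\lambda_i$, and it is the step I am least sure closes. If it does, then, exactly as in the second half of the proof of Lemma \ref{lem:repeated}, every identity used is a finite polynomial identity in the components of $(A,T,U,V)$. These identities pass to the componentwise limit, distinctness of the eigenvalues is supplied by Lemma \ref{lem:repeated}, and $|T_\infty|^2=S_0(S_0-3)$ holds in the limit, so the same argument gives the statement for $q_0^\infty,q_1^\infty,q_2^\infty$.
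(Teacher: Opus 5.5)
\textbf{There is a genuine gap.} Your reduction is correct as far as it goes, and it is also the paper's starting point. By Lemma \ref{lem:repeated} every $\nu_i\neq0$, so the common zeros of $q_0,q_1$ form the cone over the four points $\xi_i^2=c\,\nu_i$. With $T_{iir}=\nu_iu_r$, your formula $q_2(\xi)=c\sum_j(3|\nu|^2-2\nu_j^2)u_j\xi_j+6T_{123}\,\xi_1\xi_2\xi_3$ is right.

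One correction. The $\nu_i$ sum to zero, so they have mixed signs and some $s_i$ are imaginary. Hence $q_2(\xi)=0$ at a single point is one complex equation in the real unknowns $u_r,T_{123}$, and it splits into \emph{two} real equations, not one. In the paper's parametrization \eqref{eq:symbol-param}, $\nu=R(-t,-1,t+1)$, where $t>0$ is an eigenvalue parameter and not your $T_{123}$. There the imaginary part involves only $u_1,u_2$ and the real part only $u_3,T_{123}$; this is \eqref{eq:general-cubic-characteristic}, and the rest of the proof exploits this decoupling. Separately, your remark that vanishing at two points separates the $u$-part from the $T_{123}$-part is not right as stated, but you do not need it.

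The gap is that excluding this configuration, which is the whole content of the lemma, is left as a hope. Nothing you list is shown to close it. The configuration is compatible with $(A,T)$ and $C=S_0(S_0-3)$. The Bochner identities \eqref{found:simplified} and \eqref{found:weighted} are single scalar contractions. In Lemma \ref{lem:critical} they sufficed only because at a critical point $U$ is fixed up to one scalar $d$; here $U$ has many free components, and you give no reason two scalar equations would be contradictory.

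The paper needs two concrete steps.

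\emph{First step.} In a principal frame, $d\lambda_i=\nu_i\alpha$ with $\alpha=\sum u_i\omega^i$, which forces $d\alpha=0$. So $Q_{ij}=(\nabla_{e_i}\alpha)(e_j)$ is symmetric. Take the componentwise derivative of the energy identity, \eqref{eq:first-energy-symbol}, together with the frame Gauss equations \eqref{eq:offdiag-gauss-symbol} and \eqref{eq:diag-gauss-symbol}. Combining the $i=1,2$ equations with weights $1$ and $\varepsilon\sqrt t$ cancels $Q_{12},Q_{13},Q_{23}$ and yields \eqref{eq:positive-first-compatibility}, whose bracket is strictly negative. Hence $u_1=u_2=0$, and then $C>0$ gives $z=u_3\neq0$.

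\emph{Second step.} This still contradicts nothing at the level of $(A,T,U)$: $U$ retains two free real parameters, $k=Q_{33}$ and $y=Q_{23}$. The contradiction comes from $V$. The paper takes a specific combination $\mathcal C$ of nine fifth-order constraints from \eqref{eq:linear-fifth-constraints}. After reordering indices with \eqref{eq:fifth-order-commutations}, all $21$ unknown components of $V$ cancel, leaving \eqref{eq:unrestricted-characteristic-certificate}. That is $(k-\kappa)^2+(t+1)y^2$ plus a positive multiple of $\mathscr R(q)$, with $\mathscr R(q)>0$ for $q=t+t^{-1}\ge2$. So the final contradiction is with the strictly positive remainder $\mathscr R$, not with $C>0$.

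Without these identities (or equivalent ones), and without checking the cancellation of the $V$-terms and the positivity, neither the pointwise statement nor its limiting version is proved. Your limit argument is fine in spirit, but it only transfers an argument that has not yet been supplied.
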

\begin{proof}
	First, consider $p\in M^3$. By Lemma \ref{lem:repeated}, $M^3$ has three dinstinct principal curvatures at every point. According to \cite{Nomizu}, we can choose a principal orthonormal frame $\{e_1,e_2,e_3\}$ near $p$.
For cyclic $(i,j,k)$, set
$
 \nu_i:=\lambda_j-\lambda_k.$
 Similar to \eqref{human5} in the proof of Lemma \ref{lem:critical}, for each $r\in \{1,2,3\}$, there is a unique scalar $u_r$ such that
 \[
 T_{iir}=\nu_i u_r.
 \]
 From \eqref{humanC},
	\begin{equation}\label{eq:energy-for-symbol}
		\sum_i w_iu_i^2+6T_{123}^2=S_0(S_0-3),\quad w_i:=5S_0+6\lambda_i^2
	\end{equation}
		Let $\{\omega^1,\omega^2,\omega^3\}$ be the dual coframe of $\{e_1,e_2,e_3\}$ and set
$\alpha=\sum_i u_i\omega^i.$
	The eigenvalue derivative formula gives intrinsically
	\[
	d\lambda_i=\nu_i\alpha,\quad
	d\nu_i=-3\lambda_i\alpha,	\quad0=d^2\lambda_i
	=d\nu_i\wedge\alpha+\nu_i\,d\alpha.
	=\nu_i\,d\alpha
	\] 
Thus 
$
	d\alpha=0.
$
	Define
	\[
	Q_{ij}:=(\nabla_{e_i}\alpha)(e_j),
	\quad
	T_i:=e_i(T_{123}).
	\]
	Then,
$
	Q_{ij}=Q_{ji}.
$

	Relabel the eigenvalues in the order \(\lambda_1>\lambda_3>\lambda_2\), and then the principal curvatures can be writen as follows:
	\begin{equation}\label{eq:symbol-param}
		\lambda=(\lambda_1,\lambda_2,\lambda_3)=R\left(\frac{t+2}{3},-\frac{2t+1}{3},\frac{t-1}{3}\right)
		\quad \text{for} \quad R>0,\quad t>0.
	\end{equation}
 Then we have \begin{equation}\label{humanlem1}
S_0=\frac{2D}{3}R^2\quad \text{for}\quad D:=t^2+t+1.
	\end{equation}
	Also let
	\[ P:=2t^2+3t+3,\quad Q:=3t^2+3t+2,\quad K:=2t^2+t+2. 
	\]
	The common projective zeros of \(q_0,q_1\) are represented, up to conjugation,
	by \[\xi=(\sqrt{-1}\varepsilon \sqrt t,\sqrt{-1}, \sqrt{t+1}),\] where \(\varepsilon\in \{\pm1\}\).
	Indeed \((\xi_1^2,\xi_2^2,\xi_3^2)\) is proportional to
	\((\nu_1,\nu_2,\nu_3)=R(-t,-1,t+1)\).
	
	We will argue by contradiction. Assume \(q_2(\xi)=0\) for \(\xi=(\sqrt{-1}\varepsilon \sqrt t,\sqrt{-1},\sqrt{t+1})\) and some $\epsilon=\pm 1$.
	For such a vector the equation \(q_2(\xi)=0\), separated into real and
	imaginary parts, is precisely
	\begin{equation}\label{eq:general-cubic-characteristic}
		u_1=:v,\quad u_2=-\varepsilon\frac{P}{Q}\sqrt t v,
		\quad u_3=:z,\quad T_{123}=\varepsilon\frac{KR}{3\sqrt t}z.
	\end{equation}
An explicit compatibility identity will be shown, which forces \(v=0\).
	Put
	\[
	\rho_i=-\frac{3\lambda_i(7S_0/2-3\lambda_i^2)}
	{\nu_i(3\lambda_i^2-S_0/2)}.
	\]
	We point out that the denominators of $\rho_i$ are nonzero. 
	Below we list some  formulas, for which specify the entire tensor \(U\) used in the
compatibility calculation:
\begin{align}
	&c_{ij\ell}:=\langle\nabla_{e_\ell}e_j,e_i\rangle
	=\frac{T_{ij\ell}}{\lambda_j-\lambda_i}\quad(i\ne j),
	\quad c_{ii\ell}=0,\label{humanlem2}\\
	&e_\ell (u_i)=Q_{\ell i}+\sum_s c_{si\ell}u_s,
	\quad e_\ell(\nu_i)=-3\lambda_i u_\ell,\label{humanlem3}\\\
	&e_\ell (T_{iik})=-3\lambda_i u_\ell u_k+\nu_i e_\ell (u_k),
	\label{humanlem4}\\\
	&U_{ijk\ell}=e_\ell (T_{ijk})
	-\sum_s\bigl(c_{si\ell}T_{sjk}+c_{sj\ell}T_{isk}
	+c_{sk\ell}T_{ijs}\bigr).
	\label{eq:construct-K}
\end{align}
	Using \eqref{eq:energy-for-symbol} and \eqref{human1}, together with \eqref{humanlem2}, \eqref{humanlem3}, \eqref{humanlem4}, and \eqref{eq:construct-K}, one can obtain
\begin{align}
		&0=\sum_r w_r u_r Q_{ir}+6T_{123} T_i
		+6\lambda_i\nu_i u_i^3+3S_0 u_i(u_j^2-u_k^2)
		-6\lambda_iT_{123} u_j u_k,
		\label{eq:first-energy-symbol}\\
		& T_i=\nu_iQ_{jk}-\rho_i(T_{123}u_i-\nu_i u_j u_k).
		\label{eq:offdiag-gauss-symbol}\\
			&1+\lambda_i\lambda_j
		=\frac{\nu_jQ_{ii}-\nu_iQ_{jj}}{\lambda_j-\lambda_i}
		+\frac{-3\lambda_j(\lambda_j-\lambda_i)-2\nu_j^2}
		{(\lambda_j-\lambda_i)^2}u_i^2\notag	\\
		&\qquad\qquad\quad+\frac{3\lambda_i(\lambda_j-\lambda_i)-2\nu_i^2}
		{(\lambda_j-\lambda_i)^2}u_j^2
		+\frac{2T_{123}^2}{(\lambda_i-\lambda_k)(\lambda_j-\lambda_k)},\label{eq:diag-gauss-symbol}
	\end{align}
	In \eqref{eq:first-energy-symbol} and
	\eqref{eq:offdiag-gauss-symbol}, $(i,j,k)$ is cyclic, whereas in
	\eqref{eq:diag-gauss-symbol} the indices $i,j,k$ are pairwise distinct. 
	
	Substitute \eqref{eq:general-cubic-characteristic} into
	\eqref{eq:first-energy-symbol} for $i=1,2$, and use
	\eqref{eq:offdiag-gauss-symbol} to eliminate $T_1$ and $T_2$.
	Multiply the equation with $i=2$ by $\varepsilon\sqrt t$ and add it
	to the equation with $i=1$. Since
	\[
	w_1=2R^2P,\quad w_2=2R^2Q,\quad w_3=2R^2K,
	\]
	and
	\(
	(\nu_1,\nu_2,\nu_3)=R(-t,-1,t+1),
	\)
	the terms containing $Q_{12},Q_{13},Q_{23}$ cancel, and the only
	remaining $Q$-term is
	\(
	2R^2Pv\,(Q_{11}-tQ_{22}).
	\)
	Equation \eqref{eq:diag-gauss-symbol} with
	$(i,j,k)=(1,2,3)$ expresses $Q_{11}-tQ_{22}$ in terms of
	$v,z$ and $T_{123}$. Substituting \eqref{eq:general-cubic-characteristic}
	once more and simplifying the remaining scalar terms by
	\eqref{eq:energy-for-symbol}, we obtain
	\begin{equation}\label{eq:positive-first-compatibility}
		0=vR^2\left\{-\frac{3Pt(t+1)}D
		-\frac{N_1(t)}{D^2Q^3}v^2
		-\frac{N_2(t)}{9D^2Q}z^2\right\}
	\end{equation}
where
	\begin{align*}
		N_1(t)&=2t^5(1+t)^4(420+1304q+1543q^2+822q^3+166q^4),\\
		N_2(t)&=2t^3(1+t)^3(134+467q+490q^2+166q^3)
	\end{align*}
	and, $q=t+t^{-1}\ge2$. 
	Every term in the bracket of \eqref{eq:positive-first-compatibility} is strictly negative or nonpositive, and the first is
	strictly negative. Therefore \(v=0\). 
	Equation \eqref{eq:energy-for-symbol} now gives \(z\ne0\).
	Changing the principal frame sign permits \(\varepsilon=1\). Thus
	\begin{equation}\label{eq:char-point-normal-form}
		u=(0,0,z),\quad T_{123}=\frac{KR}{3\sqrt t}z,
		\quad z^2=\frac{Dt(S_0-3)}{2K(t+1)^2}.
	\end{equation}
	
	Applying \eqref{eq:offdiag-gauss-symbol}-\eqref{eq:diag-gauss-symbol} repeatedly at
	\eqref{eq:char-point-normal-form} leaves exactly the two parameters
	\(k:=Q_{33}\) and \(y:=Q_{23}\). Explicitly,
	\begin{align}
		&Q_{11}=-\frac{1+\lambda_1\lambda_3+tk}{t+1}
		+\left[-\frac K{t+1}+\frac{2K^2}{9t^2(t+1)^2}\right]z^2,\\
		&Q_{22}=\frac{t(1+\lambda_2\lambda_3)-k}{t+1}
		+\left[\frac K{t(t+1)}-\frac{2K^2}{9(t+1)^2}\right]z^2,\\
		&Q_{12}
		=-\frac{\sqrt{t}\,k}{t+1}
		+(t-1)\left[
		\frac{(2t^2+3t+2)K}
		{3t\sqrt{t}(t+1)^2}
		-\frac{\sqrt{t}}{K}
		\right]z^2.\quad Q_{13}=\sqrt ty,
		\label{eq:char-hessian}\\
		&(T_1,T_2,T_3)=
		\left(-tRy,-\sqrt tRy,-\sqrt tR\left[k+\frac{t^2-1}{K}z^2\right]\right).
		\label{eq:char-T-derivative}
	\end{align}

	We next impose necessary identities for \(V=\nabla U\).
	By curvature components \eqref{found:Gauss},
the first three indices of \(V\) are symmetric, and its two other adjacent
	commutations are
	\begin{align}
		V_{ijk\ell m}-V_{ij\ell km}
		&=-\sum_s\bigl(
		R_{si k\ell;m}A_{sj}+ R_{si k\ell}T_{sjm}
		+R_{sj k\ell;m}A_{is}+ R_{sj k\ell}T_{ism}\bigr),\notag\\
		V_{ijk\ell m}-V_{ijkm\ell}
		&=-\sum_s\bigl(
		 R_{si\ell m}T_{sjk}
		+ R_{sj\ell m}T_{isk}
		+ R_{sk\ell m}T_{ijs}\bigr).
		\label{eq:fifth-order-commutations}
	\end{align}
	Consequently the \(243\) ordered components of \(V\) can be written as
	\(21\) components indexed by nondecreasing five-tuples plus explicitly known offsets.
	For each nondecreasing five-tuple choose its component as the unknown, and
	use adjacent transpositions and \eqref{eq:fifth-order-commutations} to
	express every other ordering. 
	The necessary linear equations in these \(21\) unknowns are
	\begin{align}
		0=&\sum_i V_{iik\ell m},\notag\\
			0=&\sum_i\lambda_i V_{iik\ell m}
		+\sum_{i,j}\bigl(T_{ijm}U_{ijk\ell}
		+U_{ijkm}T_{ij\ell}+T_{ijk}U_{ij\ell m}\bigr),\notag\\
		0=&\sum_{i,j,k}T_{ijk}V_{ijk\ell m}
		+\sum_{i,j,k}U_{ijk\ell}U_{ijkm}.
		\label{eq:linear-fifth-constraints}
	\end{align}
	The first two are imposed for all \((k,\ell,m)\in\{1,2,3\}^3\), and
	the third is imposed for \(\ell\le m\).
	They are respectively the differentiated minimality, the third derivative
	of \(|A|^2/2\), and the second derivative of \(|T|^2/2=S(S-3)/2\).
	
	Only nine equations in \eqref{eq:linear-fifth-constraints} are needed.
	Denote their right-hand sides, with the indicated normalizations, by
	\begin{align*}
		\mathcal E_{k\ell m}
		&=\sum_i V_{iik\ell m},\\
		\mathcal F_{k\ell m}
		&=\frac1R\left[\sum_i\lambda_i V_{iik\ell m}
		+\sum_{i,j}\bigl(T_{ijm}U_{ijk\ell}
		+U_{ijkm}T_{ij\ell}+T_{ijk}U_{ij\ell m}\bigr)\right],\\
		\mathcal H_{\ell m}
		&=\frac1{Rz}\left[\sum_{i,j,k}T_{ijk}V_{ijk\ell m}
		+\sum_{i,j,k}U_{ijk\ell}U_{ijkm}\right].
	\end{align*}
	The factors \(R\) and \(z\) are nonzero. Set
	\[
	\begin{split}
		\mathcal C:={}&-2t(1+2t^2)\mathcal E_{113}
		+3t(1+4t)\mathcal F_{113}+3t\mathcal H_{11}\\
		&-6\sqrt t(1+t^2)\mathcal E_{123}
		+9\sqrt t(t-1)\mathcal F_{123}+3\sqrt t(1+t)\mathcal H_{12}\\
		&-2(t^2+2)\mathcal E_{223}
		-3(t+4)\mathcal F_{223}+3t\mathcal H_{22}.
	\end{split}
	\]
	Since every term vanishes, we get \(\mathcal C=0\).
	
	To evaluate this combination $\mathcal{C}$, use
	\eqref{eq:fifth-order-commutations} to put the indices of each component
	of \(V\) in nondecreasing order. All coefficients of the resulting
	\(21\) ordered representatives cancel. This cancellation is performed
	with $k$ and $y$ unrestricted. The remaining terms involve only
	\(A,T,U\), already specified by \eqref{eq:char-point-normal-form},
	\eqref{eq:char-hessian}, \eqref{eq:char-T-derivative}, and
	\eqref{eq:construct-K}, and form a quadratic polynomial in the real
	parameters $k$ and $y$.
Define
	\begin{align*}
		P_0(q)&:=48+90q+53q^2+9q^3,\\
		P_1(q)&:=1112+5728q+10876q^2+8925q^3
		+2871q^4+266q^5+144q^6+48q^7,\\
		P_2(q)&:=5944+49534q+163723q^2+266862q^3
		+204829q^4+34408q^5\\
		&\qquad+q^6\left[(2368q+9216)(q^2-4)+3952q+966\right].
	\end{align*}
	For later use, set
	\[
	\mathcal N(q)
	=729(2q+1)^4P_0(q)
	+27(S_0-3)(2q+1)^2P_1(q)+(S_0-3)^2P_2(q),
	\]
	and
	\[
	\Lambda(q)=(q-2)(q+2)^2,\quad
	J(q)=22q^2+30q+11.
	\]
	Also put
	\[
	A_0(q)=96q^4+566q^3+950q^2+667q+178
	\]
	and
	\begin{equation}\label{eq:characteristic-square-center}
		\kappa
		=-\frac{t-1}{t+1}\,
		\frac{(S_0-3)A_0(q)+27(2q+1)^2(3q+2)}
		{144(q+1)(q+2)(2q+1)^2}.
	\end{equation}
	Finally, define
	\begin{equation}\label{eq:characteristic-remainder}
		\mathscr R(q)
		=8\mathcal N(q)
		-9\Lambda(q)
		\left[J(q)(S_0-3)+9(2q+1)^2\right]^2.
	\end{equation}
	After using \eqref{eq:fifth-order-commutations} to cancel all components of $V$ in $\mathcal C$,
	substitute \eqref{humanlem2}--\eqref{eq:construct-K} and \eqref{eq:char-point-normal-form}--\eqref{eq:char-T-derivative} into the remaining terms, collect the
	result as a quadratic polynomial in $k$ and $y$, and complete the square
	in $k$, this gives 
	\begin{equation}\label{eq:unrestricted-characteristic-certificate}
		\frac{z}{24Rt(t+1)^2}\,\mathcal C
		=(k-\kappa)^2+(t+1)y^2
		+\frac{\mathscr R(q)}
		{20736(q+2)^3(q+1)^2(2q+1)^4}.
	\end{equation}
	It remains to prove that the last term in
	\eqref{eq:unrestricted-characteristic-certificate} is positive. Expanding
	$\mathscr R$ only in $S_0-3$ gives
	\begin{align}	\label{eq:characteristic-remainder-expanded}
		\mathscr R(q)
		=&729(2q+1)^4\bigl(8P_0(q)-\Lambda(q)\bigr)
		+54(S_0-3)(2q+1)^2
		\bigl(4P_1(q)-3\Lambda(q)J(q)\bigr)\notag\\
		&+(S_0-3)^2\bigl(8P_2(q)-9\Lambda(q)J(q)^2\bigr).
	\end{align}
	For $q\geq2$,
	\[
	0\leq\Lambda(q)
	=q^3+2q^2-4q-8\leq2q^3
	\]
	and
	\[
	J(q)\leq
	\left(22+15+\frac{11}{4}\right)q^2
	=\frac{159}{4}q^2.
	\]
	The displayed formulas for $P_0,P_1,P_2$ give
	\[
	P_0(q)\geq9q^3,\quad
	P_1(q)\geq266q^5,\quad
	P_2(q)\geq3952q^7+966q^6.
	\]
	For the last estimate, use $q^2-4\geq0$ and discard the remaining
	nonnegative terms. Consequently,
	\begin{align*}
		8P_0-\Lambda
		&\geq72q^3-2q^3
		=70q^3>0,\\
		4P_1-3\Lambda J
		&\geq1064q^5
		-3(2q^3)\frac{159}{4}q^2
		=\frac{1651}{2}q^5>0,\\
		8P_2-9\Lambda J^2
		&\geq31616q^7+7728q^6
		-9(2q^3)\left(\frac{159}{4}q^2\right)^2\\
		&=\frac{25399}{8}q^7+7728q^6>0.
	\end{align*}
	It follows from \eqref{eq:characteristic-remainder-expanded} that
	\[
	\mathscr R(q)>0,\quad(q\geq2).
	\]
	
	Since $\mathcal C=0$, identity
	\eqref{eq:unrestricted-characteristic-certificate} now gives
	\[
	0=(k-\kappa)^2+(t+1)y^2
	+\frac{\mathscr R(q)}
	{20736(q+2)^3(q+1)^2(2q+1)^4},
	\]
	which is impossible: $k,y,\kappa$ are real, the first two terms are
	nonnegative, and the last term is strictly positive. Therefore a
	nonzero complex vector cannot satisfy
	\[
	q_0(\xi)=q_1(\xi)=q_2(\xi)=0.
	\]
	This proves the first part of Lemma \ref{lem:no-complex-characteristic}.

	Next, let
	\begin{equation*}
		(A_n,T_n,U_n,V_n)\longrightarrow
		(A_\infty,T_\infty,U_\infty,V_\infty)
	\end{equation*}
	be a convergent sequence of normalized tensor quadruples arising
	from pointed local minimal hypersurfaces with $|A_n|^2\equiv S_0>3$.
	By Lemma~\ref{lem:repeated}, $A_\infty$ has three distinct eigenvalues.
	After passing to a subsequence and making convergent orthogonal
	changes of basis, we may express all tensors in consistently ordered
	principal orthonormal bases, with
	\begin{equation*}
		\lambda_{i,n}\longrightarrow\lambda_{i,\infty},
		\quad
		\nu_{i,n}\longrightarrow\nu_{i,\infty}\neq0.
	\end{equation*}
	By $T_{iir}=\nu_i u_r$ and
	\eqref{humanlem2}--\eqref{eq:construct-K}, the quantities
	$u_r,Q_{ij},T_i$ depend continuously on the framed tensors $(A,T,U)$
	when the eigenvalue differences are nonzero. Thus $Q_{ij}=Q_{ji}$,
	\eqref{eq:energy-for-symbol}, and
	\eqref{eq:first-energy-symbol}--\eqref{eq:diag-gauss-symbol}
	pass to the limit.
	The identities \eqref{eq:fifth-order-commutations} and
	\eqref{eq:linear-fifth-constraints} also pass to the limit, with
	$R$ and $\nabla R$ expressed in terms of $A,T$ by
	\eqref{found:Gauss} and \eqref{humanxie}.
	
	Suppose, for contradiction, that some nonzero $\xi\in\mathbb C^3$
	satisfies
	\begin{equation*}
		q_0^\infty(\xi)=q_1^\infty(\xi)=q_2^\infty(\xi)=0.
	\end{equation*}
	This assumption gives \eqref{eq:general-cubic-characteristic}
	at the limit. However, no such condition is imposed on the approximating
	sequence. Using the identities established above, the same algebraic
	reductions give \eqref{eq:char-point-normal-form}, with
	$z_\infty\neq0$, and then
	\eqref{eq:unrestricted-characteristic-certificate} for the limiting
	tensors. Since the limiting equations
	\eqref{eq:linear-fifth-constraints} give $\mathcal C_\infty=0$,
	we obtain
	\begin{equation*}
		0=(k_\infty-\kappa_\infty)^2
		+(t_\infty+1)y_\infty^2
		+\frac{\mathscr R(q_\infty)}
		{20736(q_\infty+2)^3(q_\infty+1)^2(2q_\infty+1)^4}.
	\end{equation*}
	Here $t_\infty>0$ and
	$q_\infty=t_\infty+t_\infty^{-1}\ge2$.
	By \eqref{eq:characteristic-remainder-expanded} and the estimates
	following it, $\mathscr R(q_\infty)>0$.
	The right-hand side is therefore strictly positive, which is a contradiction.
	This proves the assertion for the limiting polynomials.
	\end{proof}
\begin{remark}
	Notice that we do not obtain the limiting conclusion merely by taking
	limits of the statements for \(q_0^n,q_1^n,q_2^n\).  Actually, the additional tensors \(U_\infty,V_\infty\)
	allow all finite tensor identities used in the contradiction argument
	to pass to the limit.
\end{remark}

\section{Compactness of the space of local minimal hypersurfaces}\label{sec3}
\label{sec:compactness}
The main issue in this section is a realization problem: whether a
limit $(A_\infty, T_\infty, U_\infty)$ of normalized tensor triples $(A_n,T_n,U_n)$ arising
from local pointed minimal hypersurfaces in $S^4$ is itself realized at a point of some local
minimal hypersurface in $S^4$. To address this, we represent the hypersurfaces as normalized
graphs, converting the geometric conditions into analytic differential
equations for the graph functions. Lemma \ref{lem:no-complex-characteristic} yields a
system of partial differential equations determining fifth derivatives of the graph function from derivatives of order at most
four. An associated ordinary differential equation constructs candidate
graph functions on a common neighborhood. Then analytic continuation and
passing to the limit establish the required realization. Together
with uniform derivative bounds of $A_n$, this proves compactness of the set of
normalized fourth-order jets of graph functions.
\subsection{Normalized graphs}
\label{subsec:graphs}
Let $M^3$ be a local hypersurface in $S^4$.
Choose a point $p\in M^3$, a unit normal $\nu$, and an orthonormal basis of $T_pM$. Let $\{E_1,E_2,E_3,E_4,E_5\}$ be the standard orthonormal basis of $\mathbb{R}^5$.
After an ambient orthogonal transformation, we may assume that 
$p=E_5, \nu=E_4,$
and that the chosen tangent frame is $(E_1, E_2, E_3)$. Projection onto the first three coordinates
has invertible differential on the tangent space at $p$. The inverse
function theorem therefore gives a unique graph on a sufficiently
small neighborhood of $0\in\mathbb R^3$:
\begin{equation}\label{eq:normalized-graph-new}
 X(x)=\left(x_1,x_2,x_3,u(x),\sqrt{1-|x|^2-u(x)^2}\right),
 \quad u(0)=0,\quad Du(0)=0.
\end{equation}
Here the uniqueness is up to the chosen orthnormal basis of $T_pM$ and unit normal $\nu$.

Here we remark that the graph function $u$ in \eqref{eq:normalized-graph-new} is in fact real analytic if $M^3$ is minimal. Indeed, the round metric
on $S^4$ is real analytic and the hypersurface has zero mean curvature,
so this follows from the analytic regularity of constant mean curvature
hypersurfaces \cite[Lemma~4.16]{AMM2013}.

For a smooth function $u$ defined near a point $x\in\mathbb R^3$, its
$m$-jet at $x$, denoted by $j_x^m u$, is 
\[
j_x^m u
=(D^\alpha u(x))_{|\alpha|\le m}.
\]
Thus $j_x^m u$ records the Taylor coefficients of $u$ at $x$ up to
order $m$. We also write $j^m u$ for the corresponding jet field
$x\mapsto j_x^m u$. For the normalized graphs considered below, the zeroth-order and first-order
parts of $u$ are fixed by $u(0)=0$ and $Du(0)=0$.

The formula of the second fundamental form and covariant differentiation give
a triangular relation at $p$:
\begin{equation}\label{eq:triangular-new}
 \Sym(\nabla^mA)(p)
 =D^{m+2}u(0)+\mathcal R_m(j^{m+1}_0u),
 \qquad 0\le m\le3.
\end{equation}
Here $\Sym(\nabla^mA)$ denotes the symmetrization of $\nabla^mA$, and the term $\mathcal R_m$ depends only
lower derivatives of $u$. Normal convention is chosen such that $A(p)=D^2u(0)$. To see the leading term of \eqref{eq:triangular-new}, differentiate the second
fundamental form $m$ times: only differentiation of the second
derivatives of $u$ can produce $D^{m+2}u$, and differentiating its
coefficients or the connection produces lower-order terms. Conversely, the full covariant derivatives of the second fundamental form through order m are universal expressions in \(j_0^{m+2}u\).

Throughout this section, a local solution  means a germ of an oriented
smooth minimal hypersurface in $S^4$ with constant $|A|^2=S_0>3$. 
Consequently, by \eqref{eq:triangular-new}, $j^4_0u$ and the tensor triple
$(A,T,U)|_p$ determine each other mutually. Moreover, for a sequence of local solutions $(M_n^3, p_n)$ with graph functions $u_n$, then $j_0^4u_n$ converges if and only if $(A_n,T_n,U_n)|_{p_n}$ converges.
Actually, we consider $\{j_0^4u_n\}$ as a sequence of vectors in $\mathbb{R}^{35}$.
Here convergence of $j_0^4u_n$ is understood in the usual Euclidean
topology of $\mathbb R^{35}$.
Simliarly, the same statement holds for $j^5_0u$ and
$(A,T,U,V)|_p$. 
Define
\begin{equation}\label{eq:jet-space-new}
 \mathscr G_{S_0}
: =\{j^4_0u:\ u\text{ is a graph function of a local solution}\},
\end{equation}
where we consider $\mathscr G_{S_0}$ as a topological subspace of $\mathbb{R}^{35}$.
In subsection \ref{subsec:radial-ode-new}, we will prove $ \mathscr G_{S_0}$ is compact.

\subsection{Uniform bounds for the covariant derivatives of $A$}
\label{subsec:bounds-new}
Let $M^3$ be a minimal hypersurface in $S^4$ with the shape operator $A$ of a fixed unit normal vector field. Assume $|A|^2=S_0$ is constant.
 Write $J_m=\nabla^mA$.
There are universal contraction polynomials $P_m, Q_m$, with coefficients
depending on $S_0$, such that
\begin{equation}\label{eq:polynomial-identities-new}
 \Delta J_m=Q_m(J_0,\ldots,J_m),\quad
 |J_m|^2=P_m(J_0,\ldots,J_{m-1})\quad(m\ge1).
\end{equation}
We will prove the first identity of \eqref{eq:polynomial-identities-new} by induction. When $m=0$, it is valid by \eqref{found:Simons}, and assume
\[
\Delta J_m=Q_m(J_0,\ldots,J_m),
\]
then
\[
\begin{aligned}
	\Delta J_{m+1}
	&=
	\nabla(\Delta J_m)+[\Delta,\nabla]J_m\\
	&=\nabla(\Delta J_m)+R*J_{m+1}+(\nabla R)*J_m,
\end{aligned}
\]
where $*$ denotes a universal contraction.  Hence
\[
\Delta J_{m+1}
=:
Q_{m+1}(J_0,\ldots,J_{m+1}).
\]
For the second identity of \eqref{eq:polynomial-identities-new}, we will also prove it by induction. When $m=1$, it follows from \eqref{found:Simons}.
Assume
\[
|J_m|^2=P_m(J_0,\ldots,J_{m-1}).
\]
Then the Bochner-type identity yields
\[
|J_{m+1}|^2
=
\frac12\Delta |J_m|^2-\langle J_m,\Delta J_m\rangle
=:P_{m+1}(J_0,\ldots,J_m),
\]
where the first identity in \eqref{eq:polynomial-identities-new} is used
in the second equality.

Since $|J_0|^2=S_0$, by induction on $m$, and using the second identity in \eqref{eq:polynomial-identities-new}, we
shows that each $J_m$ is uniformly bounded once the lower-order
$J_k$ are bounded.
Therefore, there exists constants $C_m(S_0)$ such that
\begin{equation}\label{eq:finite-bounds-new}
 |\nabla^mA|\le C_m(S_0),\quad 0\le m\le3,
\end{equation}
at every point of $M^3$. By \eqref{eq:triangular-new} and \eqref{eq:finite-bounds-new} for a graph function $u$ of $M$ near $p$, we have $|j_0^5u|\leq C'_5(S_0)$ and also $|j_0^4u|\leq C'_4(S_0)$, where $ C'_5(S_0)$ and  $C'_4(S_0)$ are constants
dependent only on $S_0$.
\subsection{A determined system for fourth-order jets}
\label{subsec:fourth-order-new}
Let $M^3$ be a local minimal hypersurface in $S^4$ with three distinct principal curvatures and contant $|A|^2=S_0>3$. Choose $p\in M^3$, and suppose $u$ is the graph function of $M^3$ near $p$.
Let $A_u$ denote the shape operator of the normalized graph determined by $u$, and
\(
A_u(p)=D^2u(0).
\)
In graph coordinates consider the analytic equations
\begin{align*}
	H(u)=\tr A_u=0,\quad
	G(u)=|A_u|^2-S_0=0,\quad
	E_1(u)=|\nabla A_u|^2-S_0(S_0-3)=0.
\end{align*}
Their orders in $u$ are $2,2,3$ respectively. At $p$, their
linearized principal symbols \cite[Chapter~2, Section~9]{TaylorPDE1}, up to nonzero scalar factors, are
\begin{equation}\label{eq:symbols-new}
	q_0(\xi)=\sum_i\xi_i^2,\quad q_1(\xi)=A_p(\xi,\xi),\quad
	q_2(\xi)=T_p(\xi,\xi,\xi).
\end{equation}
\begin{proposition}\label{spacedec}
	Let 
$\mathcal P_d$
	denote the space of homogeneous complex 
	polynomials defined on $\mathbb{C}^3$ of degree $d$. Then $\mathcal P_5$ is the sum of the following three subspaces:
	\begin{equation}\label{eq:degree-five-span-new}
		\mathcal P_5=q_0\mathcal P_3+q_1\mathcal P_3
		+q_2\mathcal P_2.
	\end{equation}
\end{proposition}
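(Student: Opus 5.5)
The plan is to treat this as a commutative-algebra statement about a complete intersection. Lemma \ref{lem:no-complex-characteristic} says that $q_0,q_1,q_2$ have no common zero in $\mathbb C^3\setminus\{0\}$, where these are the polynomials built from $A_p,T_p$ (or from the limiting tensors). Three homogeneous polynomials in three variables with only the trivial common zero form a regular sequence in $\mathbb C[\xi_1,\xi_2,\xi_3]$. Their degrees are $(2,2,3)$. Hence the quotient $R=\mathbb C[\xi]/(q_0,q_1,q_2)$ is an Artinian graded complete intersection, and its Hilbert series is
\[
H_R(s)=\frac{(1-s^2)^2(1-s^3)}{(1-s)^3}=(1+s)^2(1+s+s^2)=1+3s+4s^2+3s^3+s^4 .
\]
Because of the factorization into $(1+s)(1+s)(1+s+s^2)$, the series can be read off without expanding. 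Its top degree is the socle degree $(2-1)+(2-1)+(3-1)=4$. So $R_5=0$. This is exactly the statement $\mathcal P_5=q_0\mathcal P_3+q_1\mathcal P_3+q_2\mathcal P_2$.

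I would carry out the steps in this order.

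First, I check regularity. In the graded ring $\mathbb C[\xi]$, a homogeneous sequence $f_1,\dots,f_n$ of positive-degree elements with $V(f_1,\dots,f_n)=\{0\}$ is a homogeneous system of parameters. Since the polynomial ring is Cohen--Macaulay, any system of parameters is a regular sequence. Concretely, the Nullstellensatz gives $\dim \mathbb C[\xi]/(q_0,q_1,q_2)=0$, and Cohen--Macaulayness converts this into regularity.

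Second, I compute the Hilbert series. I use the Koszul complex, which is exact for a regular sequence, or equivalently the short exact sequences $0\to R_{i-1}(-d_i)\xrightarrow{q_i}R_{i-1}\to R_i\to0$. This gives the product formula above. From it I read off $\dim R_5=0$.

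An alternative elementary route avoids the Cohen--Macaulay theorem. I would note that $q_0$ and $q_1$ have no common factor: otherwise their common zero set would be a surface, on which $q_2$ would vanish along a curve, contradicting the lemma. They therefore form a regular sequence. For $q_2$, I would argue that it is a nonzerodivisor modulo $(q_0,q_1)$. The reason is that the associated primes of $(q_0,q_1)$ are the ideals of the finitely many lines of the projective zero set $\{q_0=q_1=0\}$, and $q_2$ vanishes on none of them by Lemma \ref{lem:no-complex-characteristic}. Here I would use that $(q_0,q_1)$ is unmixed, which holds because it is a complete intersection.

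The main obstacle is bookkeeping rather than substance. Lemma \ref{lem:no-complex-characteristic} is stated with $\xi\in\mathbb C^3$, which is exactly the projective, complex, homogeneous emptiness needed, so the algebra is standard. I would also remark that the identity holds verbatim for the limiting polynomials $q_0^\infty,q_1^\infty,q_2^\infty$, since the lemma covers that case too. This is the version needed later to solve for fifth derivatives. Finally, I would record the dimension count as a consistency check. We have $\dim\mathcal P_5=21$, and the map $\mathcal P_3\oplus\mathcal P_3\oplus\mathcal P_2\to\mathcal P_5$ has domain of dimension $10+10+6=26$. Its kernel has dimension $5$, coming from the Koszul syzygies: the relation $q_1\cdot q_0-q_0\cdot q_1$ multiplied by $\mathcal P_1$ accounts for $3$ dimensions, and the syzygies involving $q_2$ account for the remaining $2$. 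Thus the map is surjective, consistent with the count of $21$ unknowns in degree five.
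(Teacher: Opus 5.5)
Your proof is correct, and it reaches the conclusion by a more structural route than the paper.

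The paper argues by explicit linear algebra. It diagonalizes $A_p$ and substitutes $\xi_i=\sqrt{\delta_i}\,y_i$, so that $\operatorname{span}_{\mathbb C}\{q_0,q_1\}$ becomes the span of $y_1^2-y_2^2$ and $y_1^2-y_3^2$. It then checks that $\mathcal P_5/(q_0\mathcal P_3+q_1\mathcal P_3)$ and $\mathcal P_2/\operatorname{span}_{\mathbb C}\{q_0,q_1\}$ are both mapped isomorphically onto $\mathbb C^4$ by evaluation at the four base points $v_{\varepsilon\eta}=(1,\varepsilon,\eta)$. Finally, given $f\in\mathcal P_5$, it interpolates $p_2\in\mathcal P_2$ with $p_2(v_{\varepsilon\eta})=f(v_{\varepsilon\eta})/q_2(v_{\varepsilon\eta})$, which forces $f-q_2p_2\in q_0\mathcal P_3+q_1\mathcal P_3$.

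This is essentially your ``elementary route'' carried out by hand. The lines $\mathbb C v_{\varepsilon\eta}$ are the associated primes of $(q_0,q_1)$, and the evaluation isomorphisms encode that $(q_0,q_1)$ is the reduced ideal of these four lines. Multiplication by $q_2$ from degree $2$ to degree $5$ then becomes the diagonal map with entries $q_2(v_{\varepsilon\eta})\neq0$.

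What each approach buys:
\begin{itemize}
\item The paper's version avoids Cohen--Macaulay, unmixedness and Koszul machinery entirely. It also makes visible that only the nonvanishing of $q_2$ at four explicit points is used, which is exactly what Lemma \ref{lem:no-complex-characteristic} verifies. However, it depends on the principal curvatures being distinct, so that the $\delta_i$ are nonzero, the substitution is invertible, and the four base points are distinct.
\item Your version needs only the absence of a nontrivial common complex zero and no normal form for the pencil. It also yields more: $R_d=0$ for every $d\geq5$, and, via exactness of the Koszul complex, the $5$-dimensional kernel of $\mathcal P_3\oplus\mathcal P_3\oplus\mathcal P_2\to\mathcal P_5$ is identified explicitly as Koszul syzygies. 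This is consistent with the $26\times21$ matrix $M(0,j^4_0u)$ having rank $21$.
\end{itemize}
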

\begin{proof}
At the point $p$, choose a principal orthonormal basis such that $A_p$ is diagonal, with distinct principal curvatures $\lambda_i$, and put
$\delta_i=\lambda_j-\lambda_k$ cyclically. The equations $q_0(\xi)=q_1(\xi)=0$
imply that $(\xi_1^2,\xi_2^2,\xi_3^2)$ is proportional to
$(\delta_1,\delta_2,\delta_3)$. Each $\delta_i$ is nonzero. Choose square roots $\sqrt{\delta_i}\in\mathbb C$ and the
invertible complex change of variables
\(\xi_i=\sqrt{\delta_i}\,y_i\).
  Then
\[
q_0(y)=\sum_i\delta_i y_i^2,
\quad
q_1(y)=\sum_i\lambda_i\delta_i y_i^2.
\]
Since $\sum_i\delta_i=0$, $\sum_i\lambda_i\delta_i=0,$
and the two vectors \((\delta_1,\delta_2,\delta_3)\), \((\lambda_1\delta_1,\lambda_2\delta_2,\lambda_3\delta_3)\) are linearly independent, these two vectors span 
\[
\{(a_1,a_2,a_3)\in\mathbb C^3:a_1+a_2+a_3=0\}.
\]
Hence, $y_1^2-y_2^2, y_1^2-y_3^2
\in\operatorname{span}_\mathbb{C}\{q_0,q_1\}.$
Set
\[
\overline{\mathcal P}_2
:=
\mathcal P_2/\operatorname{span}_\mathbb{C}\{q_0,q_1\},\quad \overline{\mathcal P}_5
:=
\mathcal P_5/(q_0\mathcal P_3+q_1\mathcal P_3).
\]
Write $[f]$ for the equivalence class of $f$ in $\mathcal P_2$ or $\mathcal P_5$. Therefore we have
$
[y_1^2]=[y_2^2]=[y_3^2]
$ in $\overline{\mathcal P}_2$.
Similarly,
multiplying the preceding quadratic relations by cubic monomials shows
that every class in $\overline{\mathcal P}_5$ is a linear combination of
\begin{equation}\label{P5}
	[y_1^5], \quad[y_1^4y_2], \quad[y_1^4y_3], \quad[y_1^3y_2y_3].
\end{equation}
For
\(
v_{\varepsilon\eta}:=(1,\varepsilon,\eta),
\varepsilon,\eta\in\{1,-1\},\)
we define evaluation map:
\[
g:
\overline{\mathcal P}_5\longrightarrow \mathbb C^4,
\quad
g([f])
=
\bigl(f(v_{\varepsilon\eta})\bigr)_{\varepsilon,\eta\in\{\pm1\}}.
\]
This map $g$ is well defined since $q_0(v_{\varepsilon\eta})=q_1(v_{\varepsilon\eta})=0$. At the four points
$v_{\varepsilon\eta}$, the four 
representatives \eqref{P5} have the values $1, \varepsilon,\eta, \varepsilon\eta,$
respectively.
Hence, if
\[
f(v_{\varepsilon\eta})
=
a+b\varepsilon+c\eta+d\varepsilon\eta
\]
vanishes for all four choices of $(\varepsilon,\eta)$, then $a=b=c=d=0.$
Thus the evaluation map $g$ is injective, and hence an isomorphism since
both spaces have dimension four. Simliarly, one can obtain an evaluation isomorphism between $\overline{\mathcal P}_2$ and $\mathbb C^4$.


Lemma~\ref{lem:no-complex-characteristic} says that $q_2$ is nonzero at
each of $v_{\epsilon\eta}$. For any $f\in\mathcal P_5$, the evaluation isomorphism between $\overline{\mathcal{P}}_2$ and $\mathbb{C}^4$ implies that there exists $p_2\in\mathcal P_2$ satisfying
\[
p_2(v_{\varepsilon\eta})
=\frac{f(v_{\varepsilon\eta})}{q_2(v_{\varepsilon\eta})},
\quad\varepsilon,\eta\in\{1,-1\}.
\]
Then $f-q_2p_2$ vanishes at all four points $v_{\varepsilon\eta}$, which implies that the equivalent
class $[f-q_2p_2]=0$ in $\overline{\mathcal{P}}_5$ . This proves
\eqref{eq:degree-five-span-new}.
\end{proof}

Now we estabilsh the differential equations of fourth-order jet $j^4_0u$. Differentiate $H$, $G$, and $E_1$ respectively $3$, $3$, and $2$
times. The resulting equations have the form
\begin{equation}\label{human}
	M(x,j_x^4u)
	\bigl(D^\alpha u(x)\bigr)_{|\alpha|=5}
	=
	b(x,j_x^4u),
\end{equation}
where $M(x,j_x^4u)$ is a $26\times21$ matrix and $b(x,j_x^4u)$
depends only on derivatives of $u$ of order at most four. At $x=0$,
\eqref{eq:degree-five-span-new} implies exactly that the coefficient
matrix $M(0, j^4_0u)$ has rank $21$ over $\mathbb C$, hence also over $\mathbb R$.
Therefore we can choose a nonsingular $21\times21$ submatrix $M_0(0, j^4_0u)$,
  it remains nonsingular in a neighborhood $\mathcal{U}$ of $(0, j^4_0u)$ in $\mathbb{R}^3\times \mathbb{R}^{35}$. Consequently, for each $\alpha=(\alpha_1,\alpha_2,\alpha_3)$ with $|\alpha|=5$, there exists an real analytic function $\Phi_\alpha$ near $(0, j^4_0u)$ such that
\begin{equation}\label{eq:fifth-determined-new}
	D^\alpha u=\Phi_\alpha(x,j_x^4u).
\end{equation}
 Now set
\[
Y:=j_x^4u=\bigl(D^\alpha u\bigr)_{|\alpha|\le4}.
\]
By \eqref{eq:fifth-determined-new}, there exists three real analytic $\mathbb{R}^{35}$-valued functions $\mathcal F_1, \mathcal F_2,\mathcal F_3$ near $(0, j^4_0u)$ satifying
\begin{equation}\label{eq:complete-system-new}
	\partial_iY=\mathcal F_i(x,Y),\quad i=1,2,3.
\end{equation}
This is a system of real analytic first-order equations for the $35$
components of $Y$. 

\subsection{Compactness of $\mathscr G_{S_0}$}
\label{subsec:radial-ode-new}

\begin{theorem}\label{prop:compact-jets-new}
For fixed $S_0>3$, the set $\mathscr G_{S_0}$ is compact.
\end{theorem}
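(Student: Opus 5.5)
Since $\mathscr G_{S_0}\subset\mathbb R^{35}$, compactness means boundedness plus closedness. Boundedness comes from Subsection~\ref{subsec:bounds-new}: $|j_0^4u|\le C_4'(S_0)$ for every graph function of a local solution. The real work is closedness. We must show that if $j_0^4u_n\to Y_\infty$ for graph functions $u_n$ of local solutions $(M_n,p_n)$, then $Y_\infty=j_0^4u_\infty$ for some local solution $u_\infty$. This is the realization problem described at the start of Section~\ref{sec:compactness}.

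\textbf{Step 1 (limiting data and nondegeneracy).} By \eqref{eq:finite-bounds-new} the fifth jets $j_0^5u_n$ are also bounded, so after passing to a subsequence the normalized quadruples $(A_n,T_n,U_n,V_n)|_{p_n}$ converge. Lemma~\ref{lem:repeated} shows that $A_\infty$ has distinct eigenvalues. Lemma~\ref{lem:no-complex-characteristic}, in its limiting form, shows that $q_0^\infty,q_1^\infty,q_2^\infty$ have no common nonzero complex zero. The proof of Proposition~\ref{spacedec} uses only these two facts, so it applies verbatim to the limiting symbols. Hence the $26\times21$ matrix $M(0,Y_\infty)$ in \eqref{human} has rank $21$. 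Note that $M$ and $b$ are universal analytic functions of $(x,j^4_xu)$ coming from differentiating $H,G,E_1$, so they do not depend on $n$. Fix a $21\times21$ minor that is nonsingular at $(0,Y_\infty)$. It stays nonsingular on a fixed neighborhood $\mathcal U$ of $(0,Y_\infty)$. On $\mathcal U$ we get a single analytic system $\partial_iY=\mathcal F_i(x,Y)$, as in \eqref{eq:complete-system-new}, which is satisfied by $Y_n=j^4u_n$ wherever $(x,Y_n(x))\in\mathcal U$.

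\textbf{Step 2 (uniform domain via an ODE along rays).} For each direction $\omega\in S^2$, restrict to the ray $x=r\omega$. Then $Y(r\omega)$ solves the ODE $\frac{d}{dr}Y=\sum_i\omega_i\mathcal F_i(r\omega,Y)$ with initial value $Y(0)$. Since $Y_n(0)\to Y_\infty$ and $\mathcal F$ is analytic, hence Lipschitz on a compact subneighborhood of $\mathcal U$, standard ODE theory gives a uniform radius $r_0>0$ with the following properties. The ray solutions for $Y_\infty$ exist for $r<r_0$, they depend analytically on $(r,\omega)$, and the ray solutions for $Y_n$ converge uniformly to them. One also has to check that for $n$ large the graphs $u_n$ actually exist on $B_{r_0}$ with $(x,j^4_xu_n)\in\mathcal U$. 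This follows from a continuity argument along rays: the unique solution of the ODE with data $Y_n(0)$ coincides with $j^4u_n$ as long as it stays in $\mathcal U$, and the uniform bounds \eqref{eq:finite-bounds-new} prevent the graph from degenerating before radius $r_0$.

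\textbf{Step 3 (passing to the limit).} Let $u_\infty$ be the zeroth component of the limiting ray solution. Because $j^4u_n\to Y_\infty(\cdot)$ uniformly on $B_{r_0}$, the derivatives of $u_n$ up to order $4$ converge uniformly. Therefore $Y_\infty(x)=j^4_xu_\infty$, so the compatibility (integrability) of the system is inherited from the $u_n$ and need not be checked separately. The equations $H=G=0$ and $E_1=0$ involve at most third derivatives, so they pass to the limit. Thus $u_\infty$ is a normalized graph of a minimal hypersurface with $|A|^2\equiv S_0$ and $j^4_0u_\infty=Y_\infty$. Moreover $u_\infty$ is analytic, being the uniform limit of analytic solutions of an analytic first-order system; alternatively, the general regularity theory applies directly. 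This proves closedness.

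The main obstacle is Step 2: obtaining the common domain $B_{r_0}$ for all large $n$, together with the identification of $j^4u_n$ with the ODE solution there. This is the point where the solution must remain inside the neighborhood $\mathcal U$ on which the fifth derivatives are determined. I would handle it by shrinking $\mathcal U$ to a compact ball and using Gronwall's inequality to control the deviation $|Y_n(r\omega)-Y_\infty(r\omega)|$ uniformly in $\omega$.
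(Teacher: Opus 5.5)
\textbf{There is a genuine gap in Step 2, and it is exactly the point the paper's proof is built to handle.}

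You assert that for large $n$ the graphs $u_n$ ``actually exist on $B_{r_0}$'' with $(x,j^4_xu_n)\in\mathcal U$. Your justification is a continuity argument along rays together with the bounds \eqref{eq:finite-bounds-new}. But a local solution is only a germ. $M_n$ may be an arbitrarily small piece, so $u_n$ may be defined only on $B_{\epsilon_n}$ with $\epsilon_n\to0$. The bounds \eqref{eq:finite-bounds-new} hold on $M_n$ and say nothing about whether $M_n$ continues. A continuity argument cannot give openness at the first radius where $u_n$ stops being defined, because that would require $u_n$ to extend as a solution, which is the very claim.

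Consequently Step 3 breaks down. Beyond $B_{\epsilon_n}$, your ray solutions with data $Y_n(0)$ are merely solutions of separate ODEs along individual rays. Nothing you have said shows that they assemble into the $4$-jet field of a single function solving $H=G=E_1=0$. The overdetermined system $\partial_iY=\mathcal F_i(x,Y)$ is not known to be integrable. Your remark that integrability ``is inherited from the $u_n$'' is valid only on the domain of $u_n$, so the argument is circular there. The Gronwall control you propose only compares ODE solutions with one another and cannot fill this gap.

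\textbf{The missing idea is analytic continuation.}
\begin{enumerate}
\item Use $x=r\omega$ itself as the parameter, as in \eqref{eq:radial-ode-new}. Then $Z(1;x,a)$ is real analytic jointly in $x$ and in the initial value $a$ on some $B_r(0)\times B_\rho(a_\infty)$. The size of this ball comes only from the ODE estimate, not from $M_n$. Joint analyticity in $x$ near $x=0$ is what the identity theorem needs below; analyticity in $(r,\omega)$, as you state it, does not give this directly.
\item Let $v_a$ be the zeroth component of $Z(1;x,a)$. By ODE uniqueness, $v_{a_n}=u_n$ near $0$.
\item Hence $H(v_{a_n})$ and $G(v_{a_n})$ are real analytic on the connected ball $B_r(0)$ and vanish near $0$. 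By the identity theorem they vanish on all of $B_r(0)$. This is what upgrades the germs to genuine solutions on a uniform domain.
\item Analytic dependence on $a$ gives $C^\infty$ convergence $v_{a_n}\to v_{a_\infty}$ on smaller balls. Therefore $H(v_{a_\infty})=G(v_{a_\infty})=0$ and $j^4_0v_{a_\infty}=\lim j^4_0u_n=a_\infty$. The condition $E_1=0$ then follows from Simons' identity, or by the same continuation argument.
\end{enumerate}

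With this continuation step inserted, your Step 1 and the limiting part of Step 3 go through as you describe.
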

\begin{proof} 
The boundedness of $\mathscr G_{S_0}\subset\mathbb{R}^{35}$ follows from \eqref{eq:finite-bounds-new}. Next we will show that $\mathscr G_{S_0}$ is closed in $\mathbb{R}^{35}$. Choose a sequence of normalized graphs $u_n$ of local solutions with
$|A|^2=S_0$, and suppose $a_n=j^4_0u_n\to a_\infty$. Since the fifth-order jets of $u_n$ are
bounded by \eqref{eq:finite-bounds-new},  after taking some subsequence if necessary, we can assume $j^5_0u_n$ converges
to a lift $\widehat a_\infty$ of $a_\infty$. Let $(A_n,T_n,U_n,V_n)$ be the normalized tensor quadruples arising from the normalized graphs of $u_n$ at $x=0$, and $(A_\infty, T_\infty, U_\infty, V_\infty)$ corresponds to the lift $\widehat a_\infty$.

Applying Lemma~\ref{lem:repeated} and
\ref{lem:no-complex-characteristic} to the limiting normalizd tensor quadruples $(A_\infty, T_\infty, U_\infty, V_\infty)$,
it follows from the argument for \eqref{eq:fifth-determined-new} that
$\operatorname{rank}M(0,a_\infty)=21$.
Choose a \(21\times21\) submatrix \(M_0(0,a_\infty)\) of \(M(0,a_\infty)\) with nonzero determinant, and let $b_0$ consist of the
corresponding entries of $b$. On a neighborhood $\mathcal U$
of $(0,a_\infty)$ where $\det M_0\neq0$, define
\begin{equation*}
	\Phi(x,Y):=M_0(x,Y)^{-1}b_0(x,Y).
\end{equation*}
It is clear that $\Phi(x,Y)$ is real analytic near $(0, a_\infty)$. 
Thus simliar to \eqref{eq:complete-system-new}, there exist three real analytic $\mathbb{R}^{35}$-valued functions $\mathcal G_1, \mathcal G_2,\mathcal G_3$ near $(0, a_\infty)$ and we can define the system
\begin{equation}\label{humanP}
	\partial_iY=\mathcal G_i(x,Y),\quad i=1,2,3.
\end{equation}
In fact, for all sufficiently
large $n$, the jet field $Y_n(x)=j_x^4u_n$ satisfies \eqref{humanP}
near $x=0$, and $Y_n(0)=j^4_0u_n=a_n$.
Consider $Z(t)$, which satisfies the following ordinary differential equation: 
for an initial value $a$ near $a_\infty$ and a small parameter
$x\in\mathbb R^3$, 
\begin{equation}\label{eq:radial-ode-new}
 \frac{dZ}{dt}=\sum_{i=1}^3x_i\mathcal G_i(tx,Z),
 \quad Z(0)=a,\quad 0\le t\le1.
\end{equation}
There exist $r,\rho>0$ such that, for every parameter
$x\in B_r(0)\subset\mathbb R^3$ and every initial value
$a\in B_\rho(a_\infty)\subset\mathbb R^{35}$, the solution
$Z(t;x,a)$ of \eqref{eq:radial-ode-new} exists for all $t\in[0,1]$ and
depends real-analytically on $(x,a)$. 
Indeed, choose $r_0,\rho>0$ such that
\(	\overline{B_{r_0}(0)}
\times\overline{B_{2\rho}(a_\infty)}
\subset\mathcal U.\) Write $$C^*=\sup\{|\mathcal{G}_i(y,Y)|: i=1,2,3,(y, Y)\in \overline{B_{r_0}(0)}\times \overline{B_{2\rho}(a_\infty)}\}, $$ 
Then one can obtain that
$|Z'(t)|\le C_*|x|$ while $(tx,Z(t))$ remains this compact subset of $\mathcal{U}$, where $C_*=\sqrt{3}
C^*$. Choose $0<r<r_0$ with $C_*r<\rho/2$. For $|x|<r$ and
$|a-a_\infty|<\rho$, as long as the local solution exists and
remains in $B_{2\rho}(a_\infty)$, we have
\begin{equation*}
	|Z(t)-a_\infty|
	\le |a-a_\infty|+C_*|x|t
	<\frac{3\rho}{2},
	\quad 0\le t<1.
\end{equation*}
Hence, as long as the solution $Z(t)$ exists for $0\le t<1$, the trajectory
$(tx,Z(t))$ remains in the interior of
\(
\overline{B_{r_0}(0)}\times\overline{B_{2\rho}(a_\infty)}
\subset\mathcal U.
\)
In particular, it cannot reach the boundary of this compact set before
time $1$.
 If its maximal right endpoint were $T\le1$, boundedness
of $Z'(t)$ would give a limit $\lim_{t\nearrow T}Z(t)$ inside the ball. 
Local existence
with this limiting value at time $T$ would then extend the solution $Z(t)$
past $T$, which is a contradiction. Hence the solution $Z(t)$ exists on $[0,1]$ (See \cite[Theorems~1.2 and~1.4]{Chicone1999}). Moreover, since $\mathcal{G}_1, \mathcal{G}_2,\mathcal{G}_3$ are real analytic on $\mathcal{U}$. Therefore,
the local solution $Z(t;x,a)$ depends real-analytically on the
parameter $x$ and the initial value $a$ by
\cite[Theorem~1.3]{Chicone1999}. In particular, $Z(1;x,a)$ depends real-analytically on $(x,a)$.

Let $v_a(x)$ be the component of $Z(1;x,a)$ corresponding to the
zeroth derivative of $u$. The function $v_a(x)$ provides an analytic candidate function
on the ball $B_r(0)$. For $a=a_n$, the actual jet field
$j_{tx}^4u_n$ satisfies~\eqref{eq:radial-ode-new} for all
sufficiently small $x$. The uniqueness theorem of the ODE system implies
$ v_{a_n}(x)=u_n(x)$ on some neighborhood of $0$. If necessary, shrink $B_r(0)$ so that all functions $v_{a_n}$ give valid normalized graphs (3.1) on $B_r(0)$. Consequently,
$H(v_{a_n})$ and $G(v_{a_n})$ are real analytic on the ball
$B_r(0)$ and vanish near $0$. By the real analyticity,
they vanish throughout $B_r(0)$.

Since $v_a(x)$ depends real analytically on $x$ and the initial value
$a$, the convergence $a_n\to a_\infty$ implies
\begin{equation*}
	\sup_{x\in\overline{B_{r'}(0)}}
	\left|D_x^\alpha v_{a_n}(x)-D_x^\alpha v_{a_\infty}(x)\right|
	\xrightarrow{n\rightarrow \infty } 0
	\quad (0<r'<r)
\end{equation*}
for every multi-index $\alpha$. Passing
to the limit shows
\[
 H(v_{a_\infty})=G(v_{a_\infty})=0,
 \quad
 j^4_0v_{a_\infty}=\lim_{n\rightarrow\infty} j^4_0v_{a_n}=a_\infty.
\]
Hence $a_\infty$ is realized by a normalized graph defined by $v_{a_\infty}$ at $x=0$ and
belongs to $\mathscr G_{S_0}$. 
Cosequently, $\mathscr G_{S_0}$ is closed in $\mathcal{R}^{35}$.

Finally, since $\mathscr G_{S_0}$ is bounded and closed in $\mathbb{R}^{35}$, it follows that $\mathscr G_{S_0}$ is compact.
\end{proof}


\section{Proof of the rigidity theorem}\label{sec:finish}

\subsection{Proof of Theorem~\ref{thm:main}}
\begin{proof}
	Fix $S_0>3$, and suppose that $\mathscr G_{S_0}\neq\varnothing$.
	By Theorem~\ref{prop:compact-jets-new},
	$\mathscr G_{S_0}$ is compact.
	First, 
	for the compact set $\mathscr G_{S_0}$, we
	define 
	\[
	F:\mathscr G_{S_0}\longrightarrow\mathbb R,
	\quad F(j^4_0u)=\tr(A^3_u(0)).
	\]
	It is clear that $F$ is continuous. For a local
	solution $N^3\subset S^4$ with a fixed local unit normal, write $J_N(p)$ for the normalized
	fourth-order jet of the graph function at $p$. Thus
	\begin{equation}\label{eq:pullback}
		f_N(p)=F(J_N(p)),
	\end{equation}
	where \(f_N=\tr(A_N^3)\).
	Let $b=\max_{\mathscr G_{S_0}} F$, and the compactness of $\mathscr G_{S_0}$ ensure that we can choose a local
	solution $N_{\max}^3\subset S^4$, a point $p\in N_{\max}$, and a normalized graph
	function $u$ near $p$ such that
$
	F(j_0^4u)=b.
$
By Lemma~\ref{lem:repeated}, the principal curvatures of
$N_{\max}$ are distinct at every point.
	At any $q\in N_{\max}$ near $p$ gives $J_{N_{\max}}(q)\in\mathscr G_{S_0}$,
	so
	\[
	f_{N_{\max}}(q)=F(J_{N_{\max}}(q))\le b=f_{N_{\max}}(p).
	\]
	Thus $df_{N_{\max}}(p)=0$. It follows from Lemma \ref{lem:critical}
	that $S_0=6$ and $b=0$. Similarly, $a=\min_{\mathscr G_{S_0}} F=0$. Consequently,
	\begin{equation}\label{eq:universal-zero}
		\mathscr G_{S_0}\ne\varnothing
		\quad\Longrightarrow\quad S_0=6,\quad F\equiv0.
	\end{equation}
 
Now, let $M$ be a hypersuface as in the assumption of Theorem \ref{thm:main}. At each point $p$, one can define a normalized graph near $p$. Consequently, the graph function
determines an element $J_M(p)$ in $\mathscr G_{S_0}$. Hence \eqref{eq:pullback} and
\eqref{eq:universal-zero} imply $S_0=6$ and
$f(p)=\tr(A_M^3(p))=F(J_M(p))=0$. Equation~\eqref{eq:main-cubic} becomes
\[
 \det(t\Id-A_M(p))=t^3-3t=t(t^2-3).
\]
At each point  $p$, $M$ has constant principal curvatures $\sqrt3,0,-\sqrt3$. Therefore, $M$ is an isoparametric hypersurface  with three distinct principal curvatures in $S^4$. According to \cite{Cartan1939}, the proof is complete.
\end{proof}
\subsection{Proof of Corollary~\ref{cor:full-local}}
\begin{proof}
 By the Gauss equation, $S=|A|^2$ is constant. Simons’ identity therefore gives
    \[
    0=\frac12\Delta S=|\nabla A|^2+S(3-S),
   \]
    and hence $S=0$ or $S\ge3$.
    
    If \(S=0\), then \(A=0\), so \(M^3\) is totally geodesic. If \(S=3\), the same identity gives \(\nabla A=0\); consequently, the principal curvatures are constant.
    Suppose now that \(S>3\). By Lemma \ref{lem:repeated}, the principal curvatures are distinct at every point. Theorem \ref{thm:main} then implies that \(M^3\) is locally congruent to the minimal Cartan isoparametric hypersurface, and in particular \(S=6\).
    
    Thus \(S\in\{0,3,6\}\), and $M^3$ is locally congruent to a minimal isoparametric hypersurface in $S^4$. 
\end{proof}
\section*{Acknowledgements}
 
 The authors would like to thank Professor Zizhou Tang for helpful
 discussions and valuable comments.
 
 \section*{AI Disclosure}
 
 The AI tool ChatGPT (OpenAI) was used in the preparation of this
 manuscript. The authors have independently verified all AI-assisted
 content and take full responsibility for the final manuscript.




\end{document}